\def\be{\begin{equation}}
\def\ee{\end{equation}}
\def\bs{\begin{subequations}}
\def\es{\end{subequations}}
\def\ba#1\ea{\begin{align}#1\end{align}}
\def\bes{\begin{equation*}}
\def\ees{\end{equation*}}
\def\bas#1\eas{\begin{align*}#1\end{align*}}
\theoremstyle{plain}
\newtheorem{theorem}{Theorem}[section]
\newtheorem{corollary}[theorem]{Corollary}
\newtheorem{lemma}[theorem]{Lemma}
\newtheorem{proposition}[theorem]{Proposition}
\newtheorem{conjecture}[theorem]{Conjecture}
\theoremstyle{remark}
\newtheorem*{motivation}{MOTIVATION}
\theoremstyle{definition}
\newtheorem{definition}[theorem]{Definition}
\newtheorem{remark}[theorem]{Remark}
\newcommand{\fullname}{Simon-Raphael Fischer}
\DeclareFontFamily{U}{mathx}{\hyphenchar\font45}
\DeclareFontShape{U}{mathx}{m}{n}{
      <5> <6> <7> <8> <9> <10>
      <10.95> <12> <14.4> <17.28> <20.74> <24.88>
      mathx10
      }{}
\DeclareSymbolFont{mathx}{U}{mathx}{m}{n}
\DeclareMathAccent{\widecheck}{0}{mathx}{"71}
\DeclareMathAccent{\wideparen}{0}{mathx}{"75}
\begin{document}
\pagenumbering{Roman}
\renewcommand{\thefootnote}{\fnsymbol{footnote}}
\begin{titlepage}

\author{Simon-Raphael Fischer\footnote{email: Simon-Raphael.Fischer@unige.ch}} 
\title{Existence of Thin Shell Wormholes using non-linear distributional geometry} 
\date{\today} 
\maketitle
\thispagestyle{empty}

\begin{center}
Master thesis written at: \\
Mathematical Institute, Ludwig Maximilians University Munich \\
Theresienstr. 39, 80333 Munich, Germany \\
\ \\
Under the supervision of Prof. Dr. Mark John David Hamilton \\
Institute for Geometry and Topology, University of Stuttgart \\
Pfaffenwaldring 57, 70569 Stuttgart, Germany \\
\ \\
\ \\
\ \\
\textbf{Abstract}
\begin{abstract}
  \small{We study for which polynomials $F$ a thin shell wormhole with a continuous metric (connecting two Schwarzschild spacetimes of the same mass) satisfy the null energy condition (NEC) in $F(R)$-gravity. We avoid junction conditions by using the mathematical framework of the Colombeau algebra which describes a generalized framework of the distributional geometry such that one can define multiplications between distributions generalizing the tensor product of smooth tensors. The aim for physics is to motivate a conjecture about the satisfaction of the NEC for suitable quadratic $F$ while the aim for mathematics is to derive a rigorous framework describing this situation. Here the $F(R)$-gravity should be seen as a toy model, important is that the NEC may be satisfied by some form of "microstructure" which does not arise in the classical setting and may have interesting physical meanings.}
 \end{abstract}
\end{center}

\end{titlepage}


\tableofcontents

\renewcommand{\thefootnote}{\arabic{footnote}}

\setlength{\parindent}{12 pt}


\pagenumbering{arabic}

\section{Introduction}

The research subject of this paper is the study of the physical existence of thin-shell
wormholes in generalized theories of gravity by using the so-called "microstructure". Wormholes can be viewed as short cuts in a
spacetime or as a connection of two different spacetimes and are not only subject in science
fiction literature, there are also many scientific papers about them like \cite{energiedichte} and the book
about Lorentzian Wormholes of Matt Visser, see \cite{Visser}. In \cite{Visser} it is shown that wormholes
consists of exotic matter, i.e. matter which does not satisfy standard energy conditions
like the null energy condition. But in papers like \cite{Dilaton}, \cite{Referee2} and \cite{DGP} it was shown that
this can change in generalized theories of gravity such that the question arises in which
theories about gravity a fixed energy condition might be satisfied by wormholes. In this paper we will state some similar conjecture about the satisfaction of some fixed energy condition (here: the null energy condition) in some form of generalized gravity (here: $F(R)$-gravity for suitable quadratic $F$).

We will look at thin shell wormholes, i.e. wormholes produced by matter sitting on
a submanifold of codimension one, the thin shell like in \cite{Visser} (Chapter 14 ff.). In our case the matter will sit on
a 2-sphere for a fixed moment of time, defined as the boundary of a three dimensional ball, and this matter will connect two Schwarzschild
spacetimes of the same mass. The generalized theory of gravity will be in our case the
so-called $F(R)$-gravity where $F$ is some suitable polynomial defined on $R$ (the scalar
curvature). $F(R)$-gravity has its origin in cosmology when one tries to get a description
of the universe without dark matter, see \textit{e.g.} the introduction of \cite{f(R)} and the references therein. Some physical basics (mainly the mathematical formulation of the basic definitions) will be shortly introduced
in the second section. But one has to beware if $F(R)$-gravity can be well defined; in
our case we will have a metric which is only continuous at the thin shell such that the
scalar curvature will be a delta distribution with support on the thin shell. Since there is
no generic definition of multiplications in the distributional framework (especially not of
powers of the delta distribution) one has seemingly a problem to combine $F(R)$-gravity
with our taken thin shell wormhole. In \cite{junction} junction conditions were derived which
say under which conditions one can well-define $F(R)$-gravity, basically these conditions
consist of regularity conditions about the metric under which the problematic multiplications do not arise. The result
there is also that a continuous metric has too less regularity for $F$ with at least quadratic powers.

But we will present a way to avoid the junction conditions by generalizing the distributional
framework. This will be done in the third section after a short discussion of
the definition about the composition of the delta distribution with a suitable function.
There we will shortly introduce the so-called Colombeau algebra which generalizes the
distributional framework in such a way that one has a definition of a multiplication between
any distribution such that the tensor product of smooth tensors is generalized and
we will also mention why one can only generalize the tensor product of smooth tensors following mainly \cite{VickerAnfang}, \cite{Sobolev}, \cite{GrundlagenI} and \cite{GrundlagenII}.
This algebra basically replaces every distribution by a sequence of smooth tensors approximating
the initial distribution and then one defines everything naturally on these
sequences, especially one can define the multiplication as a pointwise multiplication of these sequences. But one has to define a quotient structure on this sequence space to get
a generalized tensor product of smooth tensors because one has two natural approximations for smooth tensors (one given by a convolution with a strict delta net and
one given by the constant sequence since smooth tensors do not have to be smoothed) which are in general different to each other. The quotient structure is then naturally defined by identifying both approaches (for which it can be shown that this is well-defined; see the references above).
Afterwards we will present the principle of association denoted by $\approx$ which may be used
to simplify arising terms and to get back to distributions by basically taking the limit of
the sequences (which may not exist).

Since we want to study some energy condition we have to make sense of non-negativity
in the Colombeau algebra. Therefore we will generalize non-negativity in the fourth
section. We will do this by using measure theory and the well-known representation
theorem of Riesz-Markov-Saks-Kakutani which basically identifies signed measures with
distributions of zero order.\footnote{Roughly spoken distributions of zero order are distributions whose domain of definition can be extended
to continuous functions with compact support, like the Dirac Delta distribution; normally every distribution is in general only defined on smooth
functions with compact support.} Basically a generalized sense of non-negativity is given by a
vanishing negative part, i.e. the negative part of the studied sequence shall approximate
the zero distribution. This has the advantage that one can extend the set of non-negative
elements by high singular terms which can not be described as non-negative in sense of
the classical sense of non-negativity due to their singularity.

The studied energy condition will be the so-called null energy condition (NEC) which
basically says that the mass-energy density measured by lightlike observers shall be
non-negative. To define such an energy condition in the generalized framework of the
Colombeau algebra one also has to make sense of lightlike vector fields which will be also
done in the fourth section. Finally we can then define the generalized NEC and the last
part of the fourth section shows a simplification of the arising NEC for some type of
stress-energy tensors (Hawking-Ellis type I, see \cite{Hawking}).

The fifth section then combines both, the physical and the mathematical basics. I.e.
we will study the NEC in our taken physical situation of a thin shell wormhole in $F(R)$-
gravity where we will take quadratic $F$. By using an ansatz of minimising the so-called
microstructure we are going to end this paper by stating the conjecture that the NEC is
satisfied in this setting. The microstructure basically describes the new (possible physical)
information given in the generalized setting which is in our case an infinitesimal internal
structure along the proper radial distance of the wormhole. We will see that one gets this microstructure due to singularities in the underlying theory; as a short glimpse roughly like that: Due to these singularities one has in general problems to well-define multiplications between distributions, \textit{e.g.} squares of the scalar curvature described by a delta distribution with support on the thin shell wormhole (extended along the time axis). In our case the thin shell will be a sphere with a radius big enough to prevent event horizons. Thus, as mentioned, we approximate this delta distribution with a smooth sequence which roughly represents a sequence of smooth wormholes supported on a spherical shell approximating our wormhole supported on the sphere. Since we have then a spherical shell instead of a sphere, this matter sequence will not just be described simply by a delta distribution, the sequence allows more complicated matter, especially matter with an internal structure varying along the radial structure which one does not have in the thin shell case where only constant and trivial values are possible.\footnote{Due to the nature of the delta distribution one looses any information off the support of the delta distribution by $f(x) ~ \delta(x) = f(0) ~ \delta(x)$, \textit{i.e.} structure of the matter along $x$ is lost and trivial while one can keep such information in the Colombeau algebra in form of microstructure.} In the standard case (\textit{i.e.} taking the limit of the sequences, which is possible then) this structure vanishes but in our case with the arising singularities this structure remains (and may be infinity in the limit; see later) because the sequence might be multiplied with such a singularity such that the product may not vanish in the limit. This is named as microstructure because it comes from the infinitesimal information carried along the sequence itself and not from their limit since it vanishes in the limit without such multiplications. Thence one can hope to simulate more complicated internal structures for matters in thin shells, strings and so on.

We will also mention there which
problems one has to face when one tries to prove the conjecture rigorously.

\section{Physical situation}

We will study a thin shell wormhole connecting two static spherical symmetric spacetimes where we will follow mainly \cite{Visser} (chapter 15, page 176 ff.). So we study two spacetimes $M^\pm := \mathds{R} \times [a,\infty) \times \mathds{S}^2 \ni (t_\pm, r_\pm, \vartheta_\pm, \varphi_\pm)$ (the typical Schwarzschild coordinates with $\mathds{S}^2 \ni (\vartheta_\pm, \varphi_\pm)$ the 2-sphere), $a> 0$ big enough to prevent event horizons, with metrics
\ba
g^\pm := - A^\pm(r_\pm) ~ \mathrm{d}t_\pm^2 + B^\pm(r_\pm) ~ \mathrm{d}r_\pm^2 + r_\pm^2 ~ \mathrm{d}\vartheta_\pm^2 + r_\pm^2 \sin^2(\vartheta_\pm) ~ \mathrm{d}\varphi_\pm^2,
\ea
where $A^\pm, B^\pm \in C^\infty([a,\infty); (0,\infty))$\footnote{\textit{I.e.} positive smooth functions defined on $[a, \infty)$.} such that they converge to some constant at infinity, \textit{e.g.} converging to 1 to get an asymptotically flat spacetime. One sees that both spacetimes are coming from the manifold $\mathds{R} \times \left(\mathds{R} \setminus \{0\}\right) \times \mathds{S}^2$, \textit{e.g.} equipped with the Schwarzschild spacetime, in which one has cut out a ball with radius $a$ for every constant time slice. Both spacetimes can now be glued together along their boundary $\Omega^\pm := \left\{ (t_\pm, r_\pm, \vartheta_\pm, \varphi_\pm) \in M^\pm ~ \middle| ~ r_\pm=a\right\}$ which will be done in Def. \ref{def:thinshellwormhole}; in essence one is doing a connected sum and we are now doing a connected sum of the initial metric such that the resulting overall metric is at least continuous. We will not use the definition of a connected sum directly; we will use coordinates instead because one has to calculate curvature terms at the end which has to be done in coordinates in this situation. For this we need to replace the radial coordinates $r_\pm$ by a proper radial distance $\tilde{\eta}$ which describes the glued radial distances $r_\pm$ such that one gets one global radial coordinate allowing negative values. For this let us define $\tilde{\eta}_+: [a, \infty) \to [0, \infty)$ and $\tilde{\eta}_-: [a, \infty) \to (-\infty,0]$ given by
\ba
\tilde{\eta}_\pm(r_\pm) &:= \pm \int_a^{r_\pm} \sqrt{B^\pm(r)} ~ \mathrm{d}r,
\label{eq:rinverse}
\ea
where we write $r_\pm$ for the argument because of bookkeeping reasons. Therefore one could view $\tilde{\eta}$ as a function on $M^+ \sqcup M^-$ given by $\tilde{\eta}_+$ on $M^+$ and $\tilde{\eta}_-$ on $M^-$, basically one only has to know that one gets a new radial coordinate $\tilde{\eta} \in \mathds{R}$, now also allowing negative values. Also observe that Eq. \eqref{eq:rinverse} can clearly be inverted because we have (a one-dimensional case and)
\ba
\frac{\mathrm{d}\tilde{\eta}_\pm}{\mathrm{d}r_\pm} &= \pm \sqrt{B^\pm(r_\pm)} \neq 0,
\label{eq:jump}
\ea
we also have then
\ba 
\mathrm{d}\tilde{\eta}_\pm^2 &= B^\pm(r_\pm) ~ \mathrm{d}r_\pm^2.
\ea
By inserting this into the metric we can now define the overall manifold defining the wormhole which we want to study.

\begin{definition}[Thin Shell wormhole]
\leavevmode\newline
Let $M := \mathds{R} \times \mathds{R} \times \mathds{S}^2$ with the standard differentiable structure (and the orientation is arbitrary, does not have to be the standard orientation if you do not want to) where the atlas contains at least the charts $\psi_\pm: M \to \mathds{R} \times \mathds{R} \times \mathds{S}^2$ defined by $\psi_- := \mathrm{Id}_M$ with values (the coordinates) $(t_-, \tilde{\eta}, u)$ and $\psi_+ := \left( \sqrt{\frac{A^-(a)}{A^+(a)}} ~ \mathrm{Id}_\mathds{R}\right) \oplus \mathrm{Id}_\mathds{R} \oplus \mathrm{Id}_{\mathds{S}^2}$ with values $(t_+, \tilde{\eta}, u)$ (which is clearly in the differentiable structure induced by $\psi_-$) and for $u \in \mathds{S}^2$ we write locally $\vartheta, \varphi$ for the angles as previously. Thus, we have a matching condition for the times measured on both sides of the universe
\begin{subequations}
\begin{align}
\psi_- \circ \psi_+^{-1} &= \left( \sqrt{\frac{A^+(a)}{A^-(a)}} ~ \mathrm{Id}_\mathds{R}\right) \oplus \mathrm{Id}_\mathds{R} \oplus \mathrm{Id}_{\mathds{S}^2}, \\
\text{meaning } t_- &= \sqrt{\frac{A^+(a)}{A^-(a)}} ~ t_+. 
\end{align}
\end{subequations}
To make $M$ to a spacetime we equip it with the following metric like motivated before 
\bs
\begin{align}
g^+ &= - A^+(r_+(\tilde{\eta})) ~ \mathrm{d}t_+^2 + \mathrm{d}\tilde{\eta}^2 + r_+^2(\tilde{\eta}) ~ \mathrm{d}\vartheta^2 + r_+^2(\tilde{\eta}) \sin^2(\vartheta) ~ \mathrm{d}\varphi^2 & \text{on } M^+, \\
g^- &= - A^-(r_-(\tilde{\eta})) ~ \mathrm{d}t_-^2 + \mathrm{d}\tilde{\eta}^2 + r_-^2(\tilde{\eta}) ~ \mathrm{d}\vartheta^2 + r_-^2(\tilde{\eta}) \sin^2(\vartheta) ~ \mathrm{d}\varphi^2 \nonumber \\
&= - A^-(r_-(\tilde{\eta})) ~ \frac{A^+(a)}{A^-(a)} ~ \mathrm{d}t_+^2 + \mathrm{d}\tilde{\eta}^2 + r_-^2(\tilde{\eta}) ~ \mathrm{d}\vartheta^2 + r_-^2(\tilde{\eta}) \sin^2(\vartheta) ~ \mathrm{d}\varphi^2 & \text{on } M^- ,
\end{align}
\es
where\footnote{Diffeomorphic to the $M^\pm$ of the previous discussion but with changed coordinates like illustrated before.} $M^+  := \mathds{R} \times [0, \infty) \times \mathds{S}^2$, $M^- := \mathds{R} \times (-\infty, 0] \times \mathds{S}^2$ and where $r_\pm$ are given as inverses of the functions defined in Eq. \eqref{eq:rinverse}. The so-called thin shell function is given by $\eta: M \to \mathds{R}$, locally defined by $\eta \circ \psi_\pm^{-1} := \tilde{\eta}$. So we get a global continuous metric defined by
\be
g := (\Theta \circ \eta) ~ g^+ + \left( \Theta \circ (- \eta) \right) ~ g^-,
\ee
where $\Theta$ is the Heaviside distribution, so one has to see this definition in sense of distributions (we will recall this afterwards if you do not know the definition of distributions on arbitrary manifolds). To define the time orientation we state $\partial_{t_+}$ as a future-directed vector field.
\label{def:thinshellwormhole}
\end{definition}

\begin{remark}
\leavevmode\newline
Observe that the metric $g$ is in general only continuous by Eq. \eqref{eq:jump} due to the different sign there; moreover, on $M\setminus \eta^{-1}(\{0\}) = \mathds{R} \times \left(\mathds{R}\setminus \{0\} \right) \times \mathds{S}^2$ the metric is smooth, around $\eta^{-1}(\{0\}) = \mathds{R} \times \{0\} \times \mathds{S}^2$, the so-called thin shell, the metric is only continuous and has a kink there. Thence, the first derivatives will have a jump at the thin shell and the second ones are somehow proportional to the delta distribution.
\newline The metric also lives in the set of distributional tensors of rank $(0,2)$ which we will now shortly introduce because we want to calculate the curvature tensors and thence we need a distributional derivative, following \cite{VickerAnfang} and \cite{Sobolev}.
\label{remark:metric}
\end{remark}

\begin{definition}[Distributions on manifolds]
\leavevmode\newline
The set of scalar distributions $\mathcal{D}'(M)$ on a $n$-dimensional orientable and connected manifold $M$ contains all linear and continuous\footnote{Continuity is defined locally by using the standard sense of continuity of distributions; see e.g. \cite{lieb2001analysis}, Chapter 6 for more details. We will not need the technicalities of continuity.} maps $L: \Omega^n_c(M) \to \mathds{R}$, where $\Omega^n_c(M)$ is the set of $n$-forms with compact support, so the index $c$ will always denote compact support. The distributional tensors of rank $(r,s)$, $r,s \in \mathds{N}_0$, are then defined by
\ba
\mathcal{D}'(M)^r_s &:= \mathcal{D}'(M) \otimes_{C^\infty(M)} \mathcal{T}^r_s(M)
\cong \mathrm{L}_{C^\infty(M)}\left( \Omega^1(M)^r \times \mathfrak{X}(M)^s; \mathcal{D}'(M) \right) \nonumber \\
 & \cong \mathrm{L}_{C^\infty(M)}\left( \mathcal{T}^s_r(M); \mathcal{D}'(M) \right),
\ea
where $\mathrm{L}_{C^\infty(M)}(A; B)$ denotes the set of maps from $A$ to $B$ (suitable sets) which are homogeneous and linear over $C^\infty(M)$, $\mathcal{T}^r_s(M)$ is the set of sections of usual smooth tensors with rank $(r,s)$, $\Omega^1(M)$ is the set of 1-forms and $\mathfrak{X}(M)$ denotes the set of vector fields. For more details and for the proofs of these isomorphisms see \textit{e.g.} \cite{Sobolev}, subsection 2.1. One does not necessarily need to understand this notation, basically one can think of distributional tensors as tensors whose components are now in general distributions and not only smooth functions like usual, \textit{i.e.} allowing \textit{e.g.} the Delta distribution as some part of the tensor in coordinates. The Lie derivative $\mathcal{L}$ of $T \in \mathcal{D}'(M)^r_s$ is defined by
\ba
&\forall X \in \mathfrak{X}(M): ~ \forall t \in \mathcal{T}^s_r(M): ~ \forall w \in \Omega^n_c(M): ~ \\
&\left(\mathcal{L}_X T\right)(t)(w)
:= - T(\mathcal{L}_Xt)(w) - T(t)(\mathcal{L}_Xw)
\ea
where the outcome is again a distributional tensor such that one can take arbitrarily many derivatives without leaving the framework; here $T(t)$ is simply given by the contraction of dual tensors and $T(t) \in \mathcal{D}'(M)$ by definition, so $T(t)(w)$ means $(T(t))(w)$. The definition is motivated by partial integration, thus the sign change, and the product rule (for a motivation, think of $T$ as a smooth tensor and then the action of $T(t)$ on $w$ is simply given by integration over the spacetime).
\label{def:distribution}
\end{definition}

We want to study the physical existence of a wormhole as defined in Def. \ref{def:thinshellwormhole}, especially (as mentioned in the introduction) we want to study the so-called null energy condition (NEC) as defined as follows (as defined in \cite{Visser}). The indices run from 0 to 3 as usual.

\begin{definition}[NEC]
\leavevmode\newline
Let $T \in \mathcal{T}^0_2(M)$ (for us always the stress-energy tensor). Then we define
\ba
T \text{ satsfies the NEC}
&:\Leftrightarrow
\forall X \in \mathfrak{X}(M) \text{ with } g(X,X) \equiv 0:
T(X,X) \geq 0,
\ea
which basically means that the mass-energy density measured by a lightlike observer $X$ shall be non-negative. In local coordinates this definition is written as
\ba
\forall X \in \mathfrak{X}(M) \text{ with } X^\mu X^\nu g_{\mu \nu} \equiv 0:
X^\mu X^\nu T_{\mu \nu} \geq 0.
\ea
\end{definition}

As shown in \cite{Visser} (Chapter 14 ff.) the NEC is violated for any thin shell wormhole when one uses the standard Einstein field equations which is the reason why one says that wormholes consist of exotic matter. But we will instead use the formalism of the $F(R)-$gravity\footnote{$R$ is the scalar curvature and $F$ is some polynomial.} (following \cite{f(R)}) and then we study the NEC in this generalized version of gravity. Up to this point we assume that regularity of the metric $g$ and scalar curvature $R$ is no problem.

\begin{definition}[$F(R)$-gravity]
\leavevmode\newline
Let $T \in \mathcal{T}(M)^0_2$ be the stress-energy tensor. Then the generalized Einstein field equation (in sense of $F(R)$-gravity) is defined as ($\kappa := 8 \pi$)
\ba
\kappa T
=& ~
(F' \circ R) ~ \mathrm{Ric} - \frac{F \circ R}{2} ~ g
+ (F'' \circ R) \cdot \left(\square (R) ~ g - \nabla^2R\right) \nonumber \\
&
+ (F''' \circ R) \cdot \left( || \mathrm{grad}(R) ||^2 ~ g - \nabla R \otimes \nabla R \right),
\label{eq:F(S)}
\ea
where (using the Einstein sum convention and $g^{ij}$ are the components of the inverse of $g$)
\bas
&\square R := \frac{1}{\sqrt{|g|}} ~ \partial_i \left( \sqrt{|g|} ~ g^{ij} ~ \partial_jR \right) \text{ locally and Ric is the Ricci curvature tensor}, \\ 
&F: \mathds{R} \to \mathds{R} \text{ by } F(x) := \sum_{k= 0}^{N} \alpha_k x^k, N \in \mathds{N}, \alpha_k \in \mathds{R}, \alpha_0 = 2 \Lambda, \alpha_1 = 1, \\
&\nabla^2: \mathfrak{X}(M) \times \mathfrak{X}(M) \times \mathcal{T}(M)^r_s \to \mathcal{T}(M)^r_s \text{ by } \nabla^2_{X,Y}T := \nabla_X \nabla_Y T - \nabla_{\nabla_X Y} T, \\
& \mathrm{grad}(R) := g^{ij} \partial_i R ~ \partial_j \text{ locally}, ~|| \mathrm{grad}(R) ||^2 := \mathrm{grad}(R)(R) = g^{ij} \partial_iR ~ \partial_j R \text{ locally}, \\
&\text{where } \nabla \text{ is the Levi-Civita connection and the prime denotes the derivative}.
\eas
In local coordinates one has (the Christoffel symbols are denoted by $\Gamma^\alpha_{~\mu\nu}$)
\ba
\kappa T_{\mu \nu}
=& ~
\sum_{k=1}^{N} k\alpha_k R^{k-1} ~ \mathrm{Ric}_{\mu \nu} - \sum_{k=0}^N \frac{\alpha_k}{2} R^k ~ g_{\mu \nu} \nonumber \\
& + \sum_{k=2}^N k(k-1) \alpha_k R^{k-2} \cdot \left(\frac{1}{\sqrt{|g|}} ~ \partial_i \left( \sqrt{|g|} ~ g^{ij} ~ \partial_jR \right) ~ g_{\mu \nu} - \nabla_\mu \nabla_\nu R
+ \Gamma^\alpha_{~\mu \nu} \nabla_\alpha R \right) \nonumber \\
&
+ \sum_{k=3}^N k(k-1)(k-2) \alpha_k R^{k-3} \cdot \left( g^{\alpha \beta} \partial_\alpha R ~ \partial_\beta R ~ g_{\mu \nu} - \nabla_\mu R ~ \nabla_\nu R \right).
\ea
\label{def:F(R)}
\end{definition}

\begin{remark}
\leavevmode\newline
If one is interested into the derivation of this equation using variational calculus see \textit{e.g.} \cite{f(R)} (and then apply the chain rule to the given generalized Einstein field equation). By defining $\alpha_k := 0$ for all $k \geq 2$ one gets clearly back the Einstein field equations with the cosmological constant $\Lambda$.
\newline As one can see we can neglect the terms proportional to the metric because we want to check the NEC which means that we look at any lightlike vector field $X$, \textit{i.e.} $g_{\mu\nu}X^\mu X^\nu \equiv 0$.
\end{remark}

Since we already have the metric we can calculate all the arising curvature terms and insert them into Eq. \eqref{eq:F(S)} to study the NEC. But as explained in Rem. \ref{remark:metric} the metric will be only continuous on the thin shell $\eta^{-1}(\{0\})$. Thence, the scalar curvature will be proportional to a delta distribution with support on $\eta^{-1}(\{0\})$ such that we need the distributional geometry to describe our situation. Moreover, we will need a framework in which we can define a multiplication between any distributions because we have a highly non-linear situation, \textit{e.g.} for quadratic $F$ we need to define the square of the delta distribution to evaluate $F(R)$. Therefore we will now introduce the needed mathematics for this.

\section{Mathematical situation}

Although there are multiplications in the definition of the curvature terms we can calculate them in the distributional framework because the thin shell is a submanifold of codimension 1 in our case; see \textit{e.g.} \cite{VickerAnfang}, Theorem 2.2 and its discussion. But beware that one can not take the divergence of the arising stress-energy tensor given by the standard Einstein field equations (so not even in the situation of standard general relativity) as also mentioned in \cite{VickerAnfang}; basically the problem there is that one has to define the multiplication of the Christoffel symbols with the stress-energy tensor due to the definition of the Levi-Civita connection. But as explained the stress-energy tensor will be proportional to the delta distribution in standard general relativity (see also \cite{Visser}, Chapter 14) with support on the thin shell while the Christoffel symbols are discontinuous on the thin shell. The delta distribution with support at the thin shell can clearly be multiplied with tensors which are continuous around the thin shell but one can not define the multiplication with a tensor which is discontinuous around the thin shell; this can be roughly motivated by thinking of $f(x) \delta(x) = f(0) \delta(x)$. Which value for $f(0)$ one has to choose when $f$ has a jump at zero? This is problematic. One could try to solve this by defining the multiplication between the Heaviside and delta distribution how it is done in \cite{Visser} (Chapter 14, Section 14.4) by simply choosing some value with respect to the underlying physics because discontinuous functions can be viewed as a sum of suitable Heaviside distributions. But this is not rigorous as proven in \cite{VickerVorarbeit}, Section 2, because under natural assumptions one can easily see that then the delta distribution would have to be zero which can clearly not be wanted. We will see that one can define this multiplication using the Colombeau algebra without violating the physical statements in \cite{Visser} and the mathematical ones in \cite{VickerVorarbeit}.

But before, we have to define the delta distribution with support on the thin shell to discuss such things.

\subsection{Defining compositions with the delta distribution}

As already mentioned one can calculate every curvature term in our situation using the distributional framework, moreover, this can be done in the more general case of a thin shell: Let $\eta \in C^\infty(M)$ such that $||\mathrm{grad}(\eta)||^2(p) = \mathrm{grad}(\eta)(\eta) \stackrel{\mathrm{locally}}{=} g^{ij} \partial_j(\eta) ~ \partial_i (\eta) \neq 0$ for all points $p$ on an arbitrary connected orientable manifold $M$ (especially, 0 is a regular value,\footnote{This implies that $\eta^{-1}([0,\infty))$ is a smooth embedded submanifold of $M$ with boundary $\eta^{-1}(\{0\})$.} \textit{i.e.} $(\partial_i(\eta)(0))_{i=0}^3 \neq 0$ and the thin shell is not lightlike) where the metric $g$ has the form
\ba
g = g^+  ~ (\Theta \circ \eta) + g^- ~ (\Theta \circ (-\eta)),
\ea
where $g^+$ and $g^-$ are smooth metrics on $\eta^{-1}([0,\infty))$ and $\eta^{-1}((-\infty,0])$, respectively, each equipped with the same index $(-+++)$ (extended smoothly over the whole manifold) and such that $\left.g^+\right|_{\eta^{-1}(\{0\})} = \left.g^-\right|_{\eta^{-1}(\{0\})}$; so $g$ is continuous on $M$ but smooth on $M \setminus \eta^{-1}(\{0\})$. To calculate the curvature tensors one can then basically use the distributional derivatives like usually. We will skip these straightforward calculations but one term will be complicated, one needs a definition for the first derivative of the Heaviside distribution, denoted by $\delta \circ \eta$, \textit{i.e.} what one calls a composition of the delta distribution $\delta$ with $\eta$.

This can be easily done as follows, assuming that $\mathrm{grad}(\eta)$ is smooth;\footnote{In our case $\mathrm{grad}(\eta)$ is smooth and not only continuous such that one can still use $\Omega_c^n(M)$ as domain of definition and so one can still use standard definitions. In general one can only assume that $\mathrm{grad}(\eta)$ is continuous since the metric itself is only continuous. It is straightforward to handle this by using weak derivatives or using Gaußian normal coordinates to build a differentiable structure in which $\eta$ becomes a coordinate.} normally this is done by approximating the delta distribution, but we will show an easier way for which it is much easier to see that it is well-defined. Moreover, using the Colombeau algebra we can easily see later that both approaches are equivalent.

\begin{definition}[Compositions of the delta distribution]
\leavevmode\newline
$\delta \circ \eta: \Omega_c^n(M) \to \mathds{R}$ is defined by
\ba
\delta \circ \eta
:=
\mathcal{L}_{\frac{\mathrm{grad}(\eta)}{|| \mathrm{grad}(\eta) ||^2}}(\Theta \circ \eta).
\ea
\label{def:compwitheta}
\end{definition}

\begin{remark}
\leavevmode\newline
The idea of this definition is simply to define the delta distribution as a first derivative of the Heaviside distribution along the direction of $\frac{\mathrm{grad}(\eta)}{|| \mathrm{grad}(\eta) ||^2}$ because then one can expect that there will be no contribution of the inner derivative since $\frac{\mathrm{grad}(\eta)}{|| \mathrm{grad}(\eta) ||^2}$ acting on $\eta$ gives 1. See also Remark \ref{rem:motivdelta} for a comparison with a classical formula of $\delta \circ \eta$. 
\end{remark}

This definition can be simplified by the following theorem.

\begin{theorem}[More explicit formula for $\delta \circ \eta$]
\ba
\forall w \in \Omega_c^n(M): ~ (\delta \circ \eta)(w) = - \int_{\eta^{-1}(\{0\})} \frac{1}{|| \mathrm{grad}(\eta) ||^2} ~ i_{\mathrm{grad}(\eta)} w,
\ea
where the orientation of $\eta^{-1}(\{0\})$ is naturally given as the boundary of $\eta^{-1}([0, \infty))$ whose orientation itself is naturally induced by the orientation of $M$. Here $i$ means the interior product of differential forms.
\label{thm:deltcompetaexp}
\end{theorem}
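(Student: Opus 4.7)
The plan is to unfold the definition of $\delta \circ \eta$, rewrite the pairing with $\Theta \circ \eta$ as an integral over the superlevel set, and then apply Cartan's magic formula together with Stokes' theorem.

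First I would set $V := \mathrm{grad}(\eta)/|| \mathrm{grad}(\eta) ||^2$, which is a smooth vector field by the regularity assumption $|| \mathrm{grad}(\eta) ||^2 \neq 0$. Applying the distributional Lie derivative of Def.~\ref{def:distribution} to the scalar distribution $\Theta \circ \eta$, Def.~\ref{def:compwitheta} immediately reduces to
\begin{align*}
(\delta \circ \eta)(w) = \bigl(\mathcal{L}_V (\Theta \circ \eta)\bigr)(w) = -(\Theta \circ \eta)(\mathcal{L}_V w)
\end{align*}
for every $w \in \Omega^n_c(M)$, since $\mathcal{L}_V(1) = 0$ kills the first term of the distributional Lie derivative in the scalar case.

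Next, since $0$ is a regular value of $\eta$, the set $\eta^{-1}([0,\infty))$ is a smooth embedded submanifold of $M$ with boundary $\eta^{-1}(\{0\})$, and $\Theta \circ \eta$ acts as integration over this region, so the expression becomes $-\int_{\eta^{-1}([0,\infty))} \mathcal{L}_V w$. Cartan's magic formula gives $\mathcal{L}_V w = d(i_V w) + i_V(dw) = d(i_V w)$ because $w$ is a top form and hence $dw = 0$. As $w$ has compact support, so does $i_V w \in \Omega^{n-1}_c(M)$, and Stokes' theorem applies on the manifold with boundary $\eta^{-1}([0,\infty))$, yielding
\begin{align*}
\int_{\eta^{-1}([0,\infty))} d(i_V w) = \int_{\eta^{-1}(\{0\})} i_V w
\end{align*}
with the induced boundary orientation on $\eta^{-1}(\{0\})$ named in the statement.

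The final step is to pull the smooth scalar factor $|| \mathrm{grad}(\eta) ||^{-2}$ out of the interior product using $C^\infty(M)$-linearity of $i_{(\cdot)}w$ in the vector slot; combined with the minus sign from the first step this produces exactly the claimed formula. The only genuinely delicate point is getting the boundary orientation right so that no extra sign appears beyond the one inherent to the distributional Lie derivative; the paper removes this ambiguity by fixing the induced orientation convention in the theorem statement, after which the rest is a routine verification based on the regularity hypothesis on $\eta$.
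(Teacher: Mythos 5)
Your proposal is correct and takes essentially the same route as the paper's proof: unwind Def.~\ref{def:compwitheta}, use the sign flip from the distributional Lie derivative, apply Cartan's formula to a top form, and finish with Stokes' theorem on $\eta^{-1}([0,\infty))$. The one place you are slightly more explicit than the paper is in noting that the scalar factor $||\mathrm{grad}(\eta)||^{-2}$ can be pulled out of the interior product by $C^\infty(M)$-linearity; the paper leaves this step implicit.
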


\begin{proof}
\leavevmode\newline
Using Def. \ref{def:compwitheta} one gets for $X := 1/||\mathrm{grad}(\eta)||^2 ~ \mathrm{grad}(\eta)$ and $w \in \Omega_c^n(M)$
\begin{align*}
(\delta \circ \eta)(w)
&= -(\Theta \circ \eta)(\mathcal{L}_Xw)
= - \int_{\eta^{-1}([0,\infty))} \underbrace{\mathcal{L}_X w}_{\stackrel{w \text{ a } n\text{-form}}{=} (\mathrm{d} \circ i_X)w}
= - \int_{\eta^{-1}([0,\infty))} \mathrm{d}\left(i_Xw\right) \\
&\stackrel{\makebox[0mm][c]{\footnotesize \text{Stokes}}}{=} ~  - \int_{\eta^{-1}(\{0\})} i_X w,
\end{align*}
where we used that $w$ has compact support (whose compactness is not affected by $\mathrm{d}$ or $i_X$) and that $\eta^{-1}([0,\infty))$ is closed in $M$ when we applied Stokes' Theorem. $\eta^{-1}(\{0\})$ is equipped with its natural induced orientation as the boundary of $\eta^{-1}([0,\infty))$ by Stokes' Theorem.
\end{proof}

\begin{remark}
\leavevmode\newline
To motivate these definitions let us assume everything has appropriate regularities (\textit{e.g.} smooth) such that one can do everything as one would like.
\newline The basic motivation of these definitions is to define the delta distribution as the first derivative of the Heaviside distribution. So let us look at $M= \mathds{R}^n$. Then for $w \in \Omega^n_c(\mathds{R}^n)$ we write $w = f \mathrm{d}x$ for $f \in C^\infty_c(M)$ such that we get ($\theta$ is the standard Heaviside function representing the Heaviside distribution $\Theta$)
\begin{align*}
(\Theta \circ \eta) (w) &= \int_{\mathds{R}^n} (\theta \circ \eta) ~ f ~ \mathrm{d}x = \int_ {\eta^{-1}([0,\infty))} f ~ \mathrm{d}x
\end{align*}
and by calculating the first distributional derivative $\partial_i(\Theta \circ \eta)$ one gets (using the integral notation with $\partial_i(\theta \circ \eta)$ as in physics)
\begin{align*}
\partial_i(\Theta \circ \eta)(w) &= \int_{\mathds{R}^n} \partial_i(\theta \circ \eta) ~ f ~ \mathrm{d}x = - \int_{\eta^{-1}([0, \infty))} \partial_i f ~ \mathrm{d}x
\stackrel{\text{Gauss}}{=} - \int_{\eta^{-1}(\{0\})} f ~ \nu_i ~ \mathrm{d}\sigma,
\end{align*} 
where $\mathrm{d}\sigma$ is a canonical surface measure and $\nu_i$ is the $i$-th component of the unit normal vector pointing outwards, \textit{i.e.} $\nu_i := - \partial_i\eta/|\nabla \eta|$ as usual. Using the physic's notation $\partial_i(\theta \circ \eta) = (\delta \circ \eta) ~ \partial_i \eta$ one can then write
\begin{align*}
\int_{\eta^{-1}(\{0\})} f ~ \frac{\partial_i \eta}{\left| \nabla \eta \right|} ~ \mathrm{d}\sigma
&= \int_{\mathds{R}^n} (\delta \circ \eta) ~ \partial_i \eta ~ f ~ \mathrm{d}x.
\end{align*}
Since $i$ was arbitrary (or replacing $\partial_i$ in the same way like we replaced $X$ in the previous definition) one could define $\delta \circ \eta$ by comparing this with the other physics' notations $\delta(f) = \int_{\mathds{R}^n} \delta ~ f \mathrm{d}x$ and $(\delta\circ\eta)(f) = \int_{\mathds{R}^n} (\delta\circ\eta) ~ f \mathrm{d}x$, so
\begin{equation}
(\delta \circ \eta)(f) := \int_{\eta^{-1}(\{0\})} \frac{f}{\left| \nabla \eta \right|} ~ \mathrm{d}\sigma.
\label{eq:deltacompieta}
\end{equation}
Also compare the theorem with Eq. \eqref{eq:deltacompieta}: Fix a volume form dvol and write $w = f ~ \mathrm{dvol}$ for $f \in C^\infty_c(M)$, then the canonical volume form d$\sigma$ on the boundary of an oriented manifold is the contraction $i_\nu\mathrm{dvol}$ where $\nu$ is the unit normal vector pointing outside. If we assume that we have a standard Riemannian metric, \textit{i.e.} $g$ is a scalar product on each fibre of $\mathrm{T}M$ such that we can take the square root of $|| \mathrm{grad}(\eta) ||^2$, then this means in this case that
\begin{align*}
\mathrm{d}\sigma = i_{-\frac{\mathrm{grad}(\eta)}{||\mathrm{grad}(\eta)||}}\mathrm{dvol}.
\end{align*}
Inserting this in the result of the theorem one gets
\begin{align*}
(\delta \circ \eta)(w) = \int_{\eta^{-1}(\{0\})} \frac{f}{|| \mathrm{grad}(\eta) ||} ~ \mathrm{d}\sigma,
\end{align*}
which is clearly Eq. \eqref{eq:deltacompieta} in the Euclidean case.
\newline Moreover, let us compare this result with the most known result when one has finitely many zeroes of $\eta$ (and we restrict for simplicity again to the Euclidean case, so we can take the standard Euclidean volume form and metric), where we denote the zeroes by $x_1, \dots, x_k$. Then $\eta^{-1}(\{0\})$ consists only of points and $\int_{\eta^{-1}(\{0\})} \mathrm{d}\sigma$ is just the counting measure (to see this, beware in the one-dimensional case that one has a sign at any point of the evaluation $\int_{\eta^{-1}(\{0\})} \dots = \dots |_{\eta^{-1}(\{0\})}$ like in the fundamental theorem of calculus in one dimension, but this sign is cancelled by the arising signs in $\mathrm{d}\sigma = i_{-\frac{\mathrm{grad}(\eta)}{||\mathrm{grad}(\eta)||}}\mathrm{dvol}$, \textit{i.e.} the sign of $- \mathrm{grad}(\eta)$, which can easily be seen). Thus,
\bas
(\delta \circ \eta)(f) = \sum_{l=1}^k \frac{f(x_l)}{|\nabla\eta(x_l)|},
\eas
which is the standard known formula.
\newline Finally, the result of the theorem is more general since it allows continuous pseudo-Riemannian metrics and it is valid for any orientation. Additionally let us mention that both Definition \ref{def:compwitheta} and Theorem \ref{thm:deltcompetaexp} could thence clearly not work with a normal vector which could be lightlike as for lightlike submanifolds; one might then try to find another vector field with $\mathcal{L}_X(\eta)=1$ with which one takes the Lie derivative of $\Theta \circ \eta$.
\label{rem:motivdelta}
\end{remark}

One can easily see the following standard properties.

\begin{proposition}[Properties of $\delta \circ \eta$]
\leavevmode\newline
i) 
\ba
\delta \circ (-\eta) = \delta \circ \eta.
\ea
ii)
\ba
\forall X \in \mathfrak{X}(M): ~ 
\mathcal{L}_X(\Theta \circ \eta) = \mathcal{L}_X(\eta) ~ (\delta \circ \eta),
\ea
where the multiplication on the right hand side is defined as usual by $\Omega^n_c(M) \ni w \mapsto (\delta \circ \eta)(\mathcal{L}_X(\eta) ~ w)$ which is clearly well-defined since a distribution multiplied with a smooth function by this rule is still clearly a distribution.
\label{prop:deltcompetaexp}
\end{proposition}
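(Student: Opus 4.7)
The plan is to reduce both identities to the explicit integral formula of Theorem \ref{thm:deltcompetaexp}, with part (i) admitting a slicker argument directly from Definition \ref{def:compwitheta}.

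For (i), I would set $Y := \mathrm{grad}(\eta)/|| \mathrm{grad}(\eta) ||^2$ and observe that $\mathrm{grad}(-\eta)/|| \mathrm{grad}(-\eta) ||^2 = -Y$. Since $\eta^{-1}(\{0\})$ is a smooth hypersurface (hence an $n$-dimensional null set) and $(\Theta \circ \eta)(p) + (\Theta \circ (-\eta))(p) = 1$ pointwise off this set, the identity $\Theta \circ (-\eta) = 1 - \Theta \circ \eta$ holds in $\mathcal{D}'(M)$. Plugging into Definition \ref{def:compwitheta} then yields
\begin{align*}
\delta \circ (-\eta) = \mathcal{L}_{-Y}\bigl(1 - \Theta \circ \eta\bigr) = -\mathcal{L}_Y(1) + \mathcal{L}_Y(\Theta \circ \eta) = \delta \circ \eta,
\end{align*}
since the Lie derivative of the constant distribution $1$ vanishes.

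For (ii), I would split $X$ into a piece along $\mathrm{grad}(\eta)$ and one tangent to the level sets of $\eta$ via
\begin{align*}
X^\perp := \frac{\mathcal{L}_X(\eta)}{|| \mathrm{grad}(\eta) ||^2} \, \mathrm{grad}(\eta), \qquad X^\parallel := X - X^\perp,
\end{align*}
so that $\mathrm{d}\eta(X^\parallel) = \mathcal{L}_X(\eta) - \mathcal{L}_X(\eta) = 0$, i.e.\ $X^\parallel$ is tangent to $\eta^{-1}(\{0\})$. Rerunning the Stokes calculation from the proof of Theorem \ref{thm:deltcompetaexp} with this arbitrary $X$ in place of $Y$ yields
\begin{align*}
\mathcal{L}_X(\Theta \circ \eta)(w) = -\int_{\eta^{-1}(\{0\})} i_X w,
\end{align*}
while Theorem \ref{thm:deltcompetaexp} applied directly gives
\begin{align*}
\bigl(\mathcal{L}_X(\eta) \cdot (\delta \circ \eta)\bigr)(w) = -\int_{\eta^{-1}(\{0\})} \frac{\mathcal{L}_X(\eta)}{|| \mathrm{grad}(\eta) ||^2} \, i_{\mathrm{grad}(\eta)} w = -\int_{\eta^{-1}(\{0\})} i_{X^\perp} w.
\end{align*}
Subtracting, the claim reduces to showing that $\int_{\eta^{-1}(\{0\})} i_{X^\parallel} w = 0$.

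The main (essentially only) obstacle is this last vanishing, which I would handle pointwise: for any frame $(e_1, \ldots, e_{n-1})$ of $T_p \eta^{-1}(\{0\})$, the pullback value of $i_{X^\parallel} w$ on that frame is $w(X^\parallel, e_1, \ldots, e_{n-1})$, but all $n$ of these vectors lie in the $(n-1)$-dimensional subspace $T_p \eta^{-1}(\{0\})$ and are therefore linearly dependent, so the top form $w$ annihilates them. Hence the pullback of $i_{X^\parallel} w$ to $\eta^{-1}(\{0\})$ vanishes identically, and (ii) follows.
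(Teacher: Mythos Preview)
Your proof is correct. Part (ii) is essentially identical to the paper's argument: both compute $\mathcal{L}_X(\Theta\circ\eta)(w) = -\int_{\eta^{-1}(\{0\})} i_X w$ via Stokes, split off the $X^\perp$-part to recover $(\delta\circ\eta)(\mathcal{L}_X(\eta)\,w)$, and then argue that the remaining tangential contribution $\int_{\eta^{-1}(\{0\})} i_{X^\parallel} w$ vanishes. Your pointwise linear-dependence argument for that vanishing is in fact cleaner than the paper's phrasing, which invokes local coordinates adapted to the hypersurface and a ``measure zero'' integration domain.

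For part (i) you take a genuinely different route. The paper applies the explicit formula of Theorem~\ref{thm:deltcompetaexp} to $-\eta$ and tracks two sign changes: one from $\mathrm{grad}(-\eta) = -\mathrm{grad}(\eta)$ in the interior product, and one from the reversal of the induced boundary orientation when passing from $\eta^{-1}([0,\infty))$ to $(-\eta)^{-1}([0,\infty)) = \eta^{-1}((-\infty,0])$. You instead work directly from Definition~\ref{def:compwitheta} via the distributional identity $\Theta\circ(-\eta) = 1 - \Theta\circ\eta$ and the vanishing of $\mathcal{L}_Y(1)$. Your argument is shorter and avoids the orientation bookkeeping entirely; the paper's version has the minor advantage of exercising the explicit integral formula and making the orientation behaviour visible, which matters elsewhere in the paper.
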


\begin{proof}
\leavevmode\newline
\textit{i)}
\newline Let $w \in \Omega_c^n(M)$, then by using Thm. \ref{thm:deltcompetaexp}
\bas
(\delta \circ (-\eta))(w)
&=
- \int_{(-\eta)^{-1}(\{0\})} \frac{i_{-\mathrm{grad}(\eta)} w}{||\mathrm{grad}(-\eta) ||^2}
=
\int_{(-\eta)^{-1}(\{0\})} \frac{i_{\mathrm{grad}(\eta)} w}{|| \mathrm{grad}(\eta) ||^2} \\
&=
- \int_{\eta^{-1}(\{0\})} \frac{i_{\mathrm{grad}(\eta)} w}{|| \mathrm{grad}(\eta) ||^2}
= (\delta \circ \eta)(w)
\eas
because $(-\eta)^{-1}(\{0\}) = \eta^{-1}(\{0\})$ as sets but we also need its orientation to define the integral. The orientation for $\eta^{-1}(\{0\})$ is naturally given by $\eta^{-1}([0, \infty))$ (as a boundary) and likewise the orientation for $(-\eta)^{-1}(\{0\})$ is given by $(-\eta)^{-1}([0, \infty))$. Since $(-\eta)^{-1}([0, \infty)) = \eta^{-1}((-\infty,0])$ [which gives the opposite orientation on its boundary which is also $\eta^{-1}(\{0\})$] we not only know that $(-\eta)^{-1}(\{0\}) = \eta^{-1}(\{0\})$ as sets we also know that both sets do have opposite orientations such that one gets an overall sign change.
\newline \textit{ii)} \newline
$\forall X \in \mathfrak{X}(M)$:
\bas
(\mathcal{L}_X(\Theta \circ \eta))(w)
&=
- (\Theta \circ \eta)(\mathrm{d} i_X w)
\stackrel{\text{Stokes}}{=}
- \int_{\eta^{-1}(\{0\})} i_X w \\
&\stackrel{\makebox[0mm][c]{\footnotesize \text{Thm. \ref{thm:deltcompetaexp}}}}{=}
\quad ~ - \underbrace{\int_{\eta^{-1}(\{0\})} i_{X - \mathcal{L}_X(\eta) \frac{\mathrm{grad}(\eta)}{||\mathrm{grad}(\eta)||^2}} w}_{=0}
+ (\delta \circ \eta)(\mathcal{L}_X(\eta) ~ w),
\eas
by $\left( X - \mathcal{L}_X(\eta) \frac{\mathrm{grad}(\eta)}{||\mathrm{grad}(\eta)||^2} \right)(\eta) = 0$, \textit{i.e.} $\left.\left(X - \mathcal{L}_X(\eta) \frac{\mathrm{grad}(\eta)}{||\mathrm{grad}(\eta)||^2}\right)\right|_{\eta^{-1}(\{0\})}$ is a vector field tangent to the thin shell (since $\mathrm{grad}(\eta)$ is linear independent, \textit{i.e.} non-zero, such that one can extend it to a coordinate frame locally around the thin shell onto which $\mathrm{grad}(\eta)$ is clearly normal by the definition of the thin shell; so $\eta$ basically describes a normal coordinate to the thin shell while the other coordinates can be chosen to be tangent at the thin shell). So, the first summand has an integration over $\eta$ as a coordinate in it and, thus, it vanishes because the integral domain over $\eta$ is a set of measure zero, namely $\{0\}$.
\end{proof}

\subsection{Colombeau algebra}

Using these results one can calculate the curvature terms in the distributional framework in a straightforward manner. We now turn to the generalized algebra of the distributional framework such that one can define multiplications between any distribution. The framework we will use is the Colombeau algebra, following \cite{VickerAnfang}, \cite{Sobolev}, \cite{GrundlagenI} and \cite{GrundlagenII}.

The basic idea is to use that any distributional tensor $L \in \mathcal{D}'(M)^r_s$ can be approximated by a sequence of smooth tensors $(\mathcal{L}_{\varepsilon})_{\varepsilon \in (0,1]} \subset \mathcal{T}^r_s(M)$ (see \textit{e.g.} \cite{lieb2001analysis} for the case of scalar distributions in $\mathds{R}$ and the references listed above for the more general case) such that (recall Def. \ref{def:distribution}; $\cdot$ denotes the contraction of dual tensor fields)
\ba
\forall t \in \mathcal{T}^s_r(M): ~ \forall w \in \Omega_c^n(M): ~ 
\lim_{\varepsilon \to 0} \int_M (\mathcal{L}_\varepsilon \cdot t) ~ w
&= 
L(t)(w),
\label{eq:approxintegral}
\ea
where $\mathcal{L}_\varepsilon = L * \rho_\varepsilon$, \textit{i.e.} the convolution with a suitable delta net $\rho_\varepsilon$ which is defined as an admissible mollifier (approximations especially of the identity of compact sets and step functions, see also the following Lemma \ref{lem:admismoll}). The convolution of this mollifier with a tensor is defined by mollifying the components of the tensor by fixing some coordinate frame where we assume that one has \textit{e.g.} global coordinates. But there is also a generalized definition of a convolution for arbitrary manifolds, see \textit{e.g.} the remarks and the references in the section of prerequisites in \cite{Schwarzschild}. In our case we will be able to use the standard definition due to the fact that we have global coordinates. With $\mathcal{O}$ we denote the Big O notation.

\begin{lemma}[Existence of admissible mollifier]
\leavevmode\newline
There exists a net $\left(\psi_\varepsilon\right)_{\varepsilon \in (0,1]} \subset C^\infty_c(\mathds{R}^n)$ with the properties
\ba
\forall \varepsilon \in (0,1]: & ~\mathrm{supp}\left(\psi_\varepsilon\right) \subset B_1(0), \label{cond:i} \\
\forall \varepsilon \in (0,1]: &~ \int \psi_\varepsilon(x) \mathrm{d}x = 1, \label{cond:ii} \\
\forall \alpha \in \mathds{N}^n_0: ~ \exists p \in \mathds{R}: & \sup_{x \in \mathds{R}^n} \left| \partial^\alpha \psi_\varepsilon(x) \right| = \mathcal{O}\left( \varepsilon^{-p} \right) (\varepsilon \to 0), \label{cond:iii}\\
\forall j \in \mathds{N}: ~ \exists \varepsilon_0 \in (0,1]: ~ \forall 1 \leq |\alpha|\leq j:~ \forall \varepsilon \leq \varepsilon_0: & ~\int x^\alpha ~ \psi_\varepsilon ~ \mathrm{d}x = 0, \label{cond:iv} \\
\forall \nu > 0: ~ \exists \varepsilon_0 \in (0,1]: ~ \forall \varepsilon \leq \varepsilon_0: &~ \int \left|\psi_\varepsilon\right| \mathrm{d}x \leq 1 + \nu.
\label{cond:v}
\ea
I.e.,
\ba
\rho_\varepsilon := \frac{1}{\varepsilon^n} ~ \psi_\varepsilon\left( \frac{\cdot}{\varepsilon} \right)
\ea
is a delta net which is moderate (condition \eqref{cond:iii}), has finitely many vanishing moments (condition \eqref{cond:iv}) and the negative part has arbitrary small L$^1$-norm (condition \eqref{cond:v}).
\label{lem:admismoll}
\end{lemma}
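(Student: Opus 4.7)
The plan is to construct $\psi_\varepsilon$ as a small perturbation of a highly concentrated non-negative bump, so that the vast majority of its mass is positive while a polynomial correction enforces the vanishing-moments condition exactly. Fix once and for all a non-negative $\phi \in C_c^\infty(B_{1/2}(0))$ with $\int \phi \,\mathrm{d}x = 1$, and for $\delta \in (0,1]$ set $\phi_\delta(x) := \delta^{-n}\phi(x/\delta)$. Its positive-order moments satisfy $\int x^\alpha \phi_\delta \,\mathrm{d}x = \delta^{|\alpha|} m_\alpha$, where $m_\alpha := \int y^\alpha \phi(y)\,\mathrm{d}y$, hence are $\mathcal{O}(\delta^{|\alpha|})$. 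I would then correct these residual moments by adding a term $\xi_{j,\delta} \in C_c^\infty(B_{1/2}(0))$ whose $0$-moment vanishes and whose higher moments cancel those of $\phi_\delta$ up to order $j$.

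The correction is obtained via the ansatz $\xi_{j,\delta} := \phi \cdot Q_{j,\delta}$ with $Q_{j,\delta}$ a polynomial of degree $\leq j$. The $\binom{n+j}{n}$ conditions $\int x^\alpha \phi\, Q_{j,\delta}\,\mathrm{d}x = -\delta^{|\alpha|} m_\alpha$ for $1 \leq |\alpha|\leq j$, together with $\int \phi\, Q_{j,\delta}\,\mathrm{d}x = 0$, form a linear system for the coefficients of $Q_{j,\delta}$ whose matrix is the Gram matrix $A_{\alpha\beta} := \int x^{\alpha+\beta}\phi\,\mathrm{d}x$ of monomials in the weighted inner product induced by $\phi$. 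Since $\phi\geq 0$ is not a.e.\ zero, this Gram matrix is positive definite, so $Q_{j,\delta}$ exists and its coefficients are bounded by $C(j)\cdot \delta$ for a constant $C(j)$ depending only on $j$ and $\phi$. I would then set $\psi_\varepsilon := \phi_{\delta(\varepsilon)} + \xi_{j(\varepsilon),\,\delta(\varepsilon)}$, with $j(\varepsilon) \to \infty$ chosen slowly and $\delta(\varepsilon)$ sent to $0$ fast enough to dominate $C(j(\varepsilon))$, for instance $\delta(\varepsilon) := \min\!\bigl(\varepsilon,\,\varepsilon/C(j(\varepsilon))\bigr)$.

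Verification of \eqref{cond:i}--\eqref{cond:iv} is then routine. Support in $B_1(0)$ holds because both summands live in $B_{1/2}(0)$. Condition \eqref{cond:ii} reads $\int \psi_\varepsilon = 1 + 0 = 1$. The moment cancellation \eqref{cond:iv} is exact up to order $j(\varepsilon)$ by construction. For \eqref{cond:iii} one uses $\sup|\partial^\alpha \phi_{\delta(\varepsilon)}| = \delta(\varepsilon)^{-n-|\alpha|}\sup|\partial^\alpha \phi|$ and the fact that the correction term has derivatives bounded polynomially in $1/\delta(\varepsilon)$ and in $C(j(\varepsilon))$. Finally, \eqref{cond:v} follows from $\int |\psi_\varepsilon|\,\mathrm{d}x \leq \int \phi_{\delta(\varepsilon)}\,\mathrm{d}x + \|\xi_{j(\varepsilon),\delta(\varepsilon)}\|_1 \leq 1 + C(j(\varepsilon))\cdot \delta(\varepsilon)\cdot |B_{1/2}|\cdot \|\phi\|_\infty$, which tends to $1$ by the choice of $\delta(\varepsilon)$.

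The main obstacle is the three-way balancing act between \eqref{cond:iii}, \eqref{cond:iv} and \eqref{cond:v}: enlarging $j(\varepsilon)$ (needed for \eqref{cond:iv}) worsens the norm of the inverse Gram matrix and the degree of the correction polynomial, hurting both \eqref{cond:iii} and \eqref{cond:v}, while shrinking $\delta(\varepsilon)$ sharpens $\phi_\delta$ and again hurts \eqref{cond:iii}. The resolution is that \eqref{cond:iv} only demands $j(\varepsilon) \to \infty$ with \emph{no prescribed rate}, so one is free to let $j$ grow as slowly as needed --- e.g.\ $j(\varepsilon) := \lfloor \log\log(1/\varepsilon)\rfloor$ --- absorbing any superpolynomial blow-up of $C(j)$ into a polynomial dependence on $1/\varepsilon$. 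Quantitative bounds on $C(j)$ coming from the theory of orthogonal polynomials on compact sets with strictly positive weight then make the argument fully rigorous.
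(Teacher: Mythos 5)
The paper does not actually give a proof of this lemma: it only says ``See the appendix of \cite{Sobolev}.'' So there is nothing in the text to compare your argument against, and the assessment below is purely a check of your construction on its own terms.

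Your construction is correct. The idea of writing $\psi_\varepsilon = \phi_{\delta(\varepsilon)} + \phi\,Q_{j(\varepsilon),\delta(\varepsilon)}$, where $\phi_\delta$ is a rescaled non-negative bump and $Q_{j,\delta}$ is the polynomial of degree $\leq j$ that kills the residual moments, works: the Gram matrix $A_{\alpha\beta}=\int x^{\alpha+\beta}\phi$ is positive definite because $\phi\geq 0$ does not vanish identically; the right-hand side of the linear system is $\mathcal{O}(\delta)$ because $\int x^\alpha\phi_\delta=\delta^{|\alpha|}m_\alpha$; hence the coefficients of $Q_{j,\delta}$ are $\mathcal{O}(C(j)\,\delta)$ with $C(j)$ independent of $\delta$. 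Conditions \eqref{cond:i} and \eqref{cond:ii} are immediate, \eqref{cond:iv} is exact by construction for $|\alpha|\leq j(\varepsilon)$, and \eqref{cond:iii}, \eqref{cond:v} follow from the three-way balance you identify. Your resolution of that balance --- $j(\varepsilon)\to\infty$ can be made as slow as one likes because \eqref{cond:iv} prescribes no rate, so any increasing $C(\cdot)$ can be kept sub-polynomial in $1/\varepsilon$ by a diagonal choice of $j(\varepsilon)$ --- is exactly the right observation and is the crux of the argument.

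Two small bookkeeping points you should tighten when writing this up. First, passing from a coefficient bound $|c_\beta|\leq C(j)\delta$ to $\|Q_{j,\delta}\|_{L^\infty(B_{1/2})}$ and to $\sup|\partial^\gamma Q_{j,\delta}|$ introduces combinatorial factors of order $\binom{n+j}{n}$ and $j^{|\gamma|}$, which your estimates for \eqref{cond:iii} and \eqref{cond:v} implicitly drop; they are harmless because $j(\varepsilon)$ grows sub-polynomially, but they should be recorded (e.g.\ fold them into an enlarged $\tilde C(j):=\binom{n+j}{n}\,j^{j}\,C(j)$ and define $\delta(\varepsilon)$ and $j(\varepsilon)$ relative to $\tilde C$). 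Second, your formula $\delta(\varepsilon)=\min(\varepsilon,\varepsilon/C(j(\varepsilon)))$ is a little more baroque than needed: fixing $\delta(\varepsilon)=\varepsilon$ and choosing $j(\varepsilon):=\max\{j:\tilde C(j)\leq\varepsilon^{-1/2}\}$ already yields $\tilde C(j(\varepsilon))\,\delta(\varepsilon)\leq\sqrt{\varepsilon}\to 0$ for \eqref{cond:v} and keeps all the derivative bounds in \eqref{cond:iii} polynomial in $1/\varepsilon$, while making \eqref{cond:iii} for the $\phi_{\delta}$ piece immediate since $\delta^{-n-|\alpha|}=\varepsilon^{-n-|\alpha|}$. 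With those cosmetic fixes the proof is complete and self-contained, which is arguably an improvement over the paper's bare citation.
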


\begin{proof}
See the appendix of \cite{Sobolev}.
\end{proof}

\begin{remark}
\leavevmode\newline
It is not important to understand these mathematical technicalities. If you are a physicist who does not work that intensive with mathematics then you still have seen them at least once in your studies. You can think of these mollifiers/delta nets $\rho_\varepsilon$ as an approximation of the delta distribution like the typical sequences you may have seen once; this definition covers some specific sort of such sequences. Simply think of them as some approximation of the delta distribution. Condition \eqref{cond:iii} means that the singularities in the sequence and its derivatives are just of type $\varepsilon^{-p}$, often the height of $\rho_\varepsilon$ at 0 increases with smaller $\varepsilon$ by $1/\varepsilon$ while the "width" decreases by $\varepsilon$ due to condition \eqref{cond:ii}.\footnote{Think for these two conditions of the typical rectangle sequence approximating the delta distribution.} Conditions \eqref{cond:ii} and \eqref{cond:v} only preserves that the negative part is vanishing and that the integral is 1, \textit{i.e.} $\delta$ is a positive distribution ("positive singularity") and $\int \delta = 1$, the normalization condition. Condition \eqref{cond:i} preserves that $\rho_\varepsilon(x)$ converges to 0 for $x \neq 0$, a physicist would write $\delta(x) = 0$ for all $x \neq 0$.
\newline Condition \eqref{cond:iv} is very essential mathematically for the Colombeau algebra, this condition will lead to the fact that our very general sense of multiplication will preserve the multiplication of smooth tensors; see the cited references for the Colombeau algebra.
\end{remark}

Replacing any arising distribution with such a sequence one has a natural way to define multiplications by the pointwise multiplication of these sequences. But of course one wants to generalize the multiplication and the tensor product, \textit{i.e.} one wants to embed the distributional framework into a differential algebra generalizing addition, derivative and tensor product. There is an incompatibility result of Schwartz which states that this can not be done for the multiplication of $C^k$-tensors (see \cite{VickerVorarbeit}, section 2), $k \in \mathds{N}_0$, but for smooth tensors this is possible. By the sequence approach this is not yet done since the convolution is clearly not a homomorphism with respect to the tensor product, \textit{i.e.} $f * (g \cdot h) \neq (f * g) \cdot (f * h)$ in general, so a multiplication of smooth tensors is not necessarily preserved. But a smooth tensor can also be approximated by a constant sequence since the tensor is already smooth and for constant sequences it is clear that the standard tensor product between smooth tensors will be generalized. Thus, the idea is to identify both sequence approaches for smooth tensors, the constant sequence and the sequence using the convolution, by doing a suitable quotient structure (w.r.t. addition). There is no place to discuss this completely in this paper, for a more complete picture see \textit{e.g.} \cite{Sobolev} and the references therein. However, to calculate things it is not really important to understand the rigorous definition. Keep simply the idea of using a sequence $u_\varepsilon$ of smooth tensors to approximate a distribution $u$.

Our approach of defining the Colombeau algebra will be dependent on the admissible mollifier and on coordinates; if you want to see a mollifier and coordinate independent version, the full Colombeau algebra, see \cite{GrundlagenI} and \cite{GrundlagenII}. We will only state the definition of the Colombeau algebra using a fixed admissible mollifier $\rho_\varepsilon$ (like in \cite{Sobolev} or in \cite{Schwarzschild}) and an arbitrary norm $|| \cdot ||$ for tensors induced by a standard metric on manifolds.\footnote{The following definitions will not depend on this metric, see \textit{e.g.} \cite{GrundlagenII} or \cite{Sobolev}.}

\begin{definition}[Colombeau algebra]
\leavevmode\newline
Let $(u_\varepsilon)_\varepsilon$ be a sequence of smooth tensors (where $\mathcal{O}$ denotes the big O notation).
\newline\textit{i)} (Negligible tensors)
\bas
(u_\varepsilon)_\varepsilon \in \hat{\mathcal{N}}(M)^r_s 
~ :\Leftrightarrow ~ &
\forall K \subset\subset M: ~ \forall k \in \mathds{N}_0: ~ \forall m \in \mathds{N}_0: ~ \forall X_1, \dots, X_k \in \mathfrak{X}(M): ~ \\
&\sup_{x \in K} || \mathcal{L}_{X_1} \dots \mathcal{L}_{X_k} u_\varepsilon(x) || = \mathcal{O}(\varepsilon^{m}) \text{ as } \varepsilon \to 0
\eas
\textit{ii)} (Moderate tensors)
\bas
(u_\varepsilon)_\varepsilon \in \hat{\mathcal{E}}_m(M)^r_s
~ :\Leftrightarrow ~ &
\forall K \subset\subset M: ~ \forall k \in \mathds{N}_0: ~ \exists N \in \mathds{N}: ~ \forall X_1, \dots, X_k \in \mathfrak{X}(M): ~ \\
&\sup_{x \in K} || \mathcal{L}_{X_1} \dots \mathcal{L}_{X_k} u_\varepsilon(x) || = \mathcal{O}\left(\varepsilon^{-N}\right) \text{ as } \varepsilon \to 0
\eas
\textit{iii)} (Colombeau algebra)
\bas
\hat{\mathcal{G}}(M)^r_s := \left.\hat{\mathcal{E}}_m(M)^r_s\middle/\hat{\mathcal{N}}(M)^r_s\right.,
\eas
and for all $[(u_\varepsilon)_\varepsilon], [(v_\varepsilon)_\varepsilon] \in \hat{\mathcal{G}}(M)^r_s$, $X \in \mathfrak{X}(M)$ we define
\bas
[(u_\varepsilon)_\varepsilon] + [(v_\varepsilon)_\varepsilon]
&:=
[(u_\varepsilon + v_\varepsilon)_\varepsilon], &
[(u_\varepsilon)_\varepsilon] \otimes [(v_\varepsilon)_\varepsilon]
&:=
[(u_\varepsilon \otimes v_\varepsilon)_\varepsilon], &
\mathcal{L}_X[(u_\varepsilon)_\varepsilon]
&:=
[(\mathcal{L}_X u_\varepsilon)_\varepsilon].
\eas
\label{def:Colombeau}
\end{definition}

\begin{remark}
\leavevmode\newline
In \cite{Sobolev} (especially also in the appendix) it is argued that this gives a generalized differential algebra such that one gets a generalized distributional framework now equipped with a tensor product generalizing the tensor product of smooth tensors; for the full Colombeau algebra, the mollifier and coordinate independent one, this is done in \cite{GrundlagenI} and \cite{GrundlagenII}.
\newline The injective embedding of distributions into the Colombeau algebra will be denoted by $\iota$, so for $L \in \mathcal{D}'(M)^r_s$ we have $\iota(L) = [(\mathcal{L}_\varepsilon)_\varepsilon]$ as defined in Eq. \eqref{eq:approxintegral}. For simplicity one often drops all the brackets, only writing $u_\varepsilon$ for elements of the Colombeau algebra and depending on the context one means with $\iota(L)$ either the sequence in sense of Colombeau (so the equivalence class without writing the brackets) or the sequence given by the convolution with a delta net.
\end{remark}

Since this algebra is mainly motivated by Eq. \eqref{eq:approxintegral} one could ask (especially after doing multiplications of distributions) what happens when one takes again $\varepsilon$ to 0. But the outcome does in general not converge, like the square of the delta distribution, see \textit{e.g.} \cite{Multiplikation}, Section 6. But the difference of two elements of the Colombeau algebra can converge such that one could try to identify both elements in sense of their behaviour for $\varepsilon \to 0$, \textit{i.e.} in their somewhat distributional behaviour. This is called the association as defined in \cite{Sobolev}, Subsection 2.3. 

\begin{definition}[Association]
\leavevmode\newline
Let $[(u_\varepsilon)_\varepsilon]$ and $[(v_\varepsilon)_\varepsilon] \in \hat{\mathcal{G}}(M)^r_s$.
\bas
[(u_\varepsilon)_\varepsilon] \approx [(v_\varepsilon)_\varepsilon]
~:\Leftrightarrow~&
\forall w \in \Omega_c^n(M): ~ \forall t \in \mathcal{T}(M)^s_r: 
\lim_{\varepsilon \to 0} \int_M \left( \left( u_\varepsilon - v_\varepsilon \right) \cdot t \right) ~ w = 0,
\eas
we then say that $[(u_\varepsilon)_\varepsilon]$ and $[(v_\varepsilon)_\varepsilon]$ are associated to each other.
\end{definition}

\begin{remark}
\leavevmode\newline
One can easily check that this is an equivalence relation, especially this integral equality holds for two representative sequences of the same equivalence class.
\newline For $L, S \in \mathcal{D}'(M)^r_s$ it is clear by Eq. \eqref{eq:approxintegral} that $\iota(L) \approx \iota(S)$ implies $L=S$ and so $\iota(L) = \iota(S)$; hence the association generalizes the distributional equality. Basically one can expect that also every known multiplication rule in the distributional setting is generalized in the Colombeau algebra in sense of association. But one has to prove this, see \textit{e.g.} \cite{GrundlagenII}, Section 9, Corollary 9.13, for a proof for the multiplication between continuous tensors or for multiplications between a smooth tensor and a distribution. Moreover, in \cite{Sobolev} it is shown that the distributional formulation of general relativity (the part of it which can be formulated by distributions) is preserved under the level of association; also in \cite{GenRiem} it is shown how the Riemannian geometry in sense of Colombeau works and how it is generalized in the level of association. Basically it is shown in \cite{GenRiem} that for a continuous metric (like in our case) one can do everything as usual, so all the formulas of the curvature terms stay in their usual form. One only has to replace the corresponding terms with their approximating sequences.
\newline An example for an association (see also \cite{Multiplikation} for the easy proof) is $\iota(\Theta)^2 \approx \iota(\Theta)$ (but $\iota(\Theta)^2 \neq \iota(\Theta)$) like one expects this in the distributional framework. It can be easily seen that the association is stable under the Lie derivative and thus one can then derive $\iota(\delta) ~ \iota(\Theta) \approx \frac{1}{2} \iota(\delta)$ by differentiating both sides of $\iota(\Theta)^2 \approx \iota(\Theta)$ $\big($similar for $\delta \circ \eta$ and $\Theta \circ (\pm\eta)$; the sign does not matter, \textit{i.e.} $\iota(\delta \circ \eta) ~ \iota(\Theta \circ (\pm \eta)) \approx \frac{1}{2} \iota(\delta \circ \eta)$$\big)$. Recall that we mentioned at the beginning of this section that such rules are not rigorous in the distributional framework although the multiplication rule $\delta \Theta = \frac{1}{2} \delta$ is used in \cite{Visser} (Chapter 14, Section 14.4) to well-define the divergence of the stress-energy tensor. One can see or hope that one can use the results although they are not rigorously derived in the distributional framework because the used multiplication rule holds on the weaker level of association in the Colombeau algebra. So translating the situation into the Colombeau algebra one may get the same results (which are based on the divergence of the stress-energy tensor) on the level of association. But then rigorously derived.
\label{rem:association}
\end{remark}

\section{Generalized Non-negativity and NEC}

\subsection{Non-negativity in the Colombeau algebra}

Using the Colombeau algebra one can now define the stress-energy tensor by using Eq. \eqref{eq:F(S)} although the scalar curvature $R$ will be a delta distribution with support on the thin shell (see the beginning of the fifth section to see an explicit expression). But the stress-energy tensor will now depend on the index $\varepsilon$ and thence we have to clarify now what it actually means to study the NEC. We would like a definition which is independent of $\varepsilon$ and which generalizes the NEC in sense of distributions, \textit{i.e.} we should find a suitable generalized definition of non-negativity. Let us shortly recall the sense of non-negativity for distributions. In the following $M$ is again a connected orientable $n$-dimensional smooth manifold.

\begin{definition}[Non-negative distributions]
\leavevmode\newline
Let $w \in \Omega_c^n(M)$ and fix a volume form dvol then we can write $w = f ~ \mathrm{dvol}$ for a $f \in C^\infty_c(M)$ and we define
\bas
w \geq 0 &:\Leftrightarrow f \geq 0.
\eas
Then for $L \in \mathcal{D}'(M)$ we define
\bas
L \geq 0 &:\Leftrightarrow \forall w \in \Omega_c^n(M) \text{ with } w \geq 0: ~ L(w) \geq 0.
\eas
\label{def:nonnegdistr}
\end{definition}

A first idea of defining non-negativity for an element $[(u_\varepsilon)_\varepsilon]$ of the Colombeau algebra would be to say that $[(u_\varepsilon)_\varepsilon]$ if and only if $[(u_\varepsilon)_\varepsilon]$ is associated to an embedded non-negative distribution. As already mentioned we will have a delta distribution as a scalar curvature and in $F(R)$ one could get for example the square of the delta distribution in a term of the stress-energy tensor. But as explained in the previous section a square of the delta distribution will not converge to any distribution for $\varepsilon \to 0$ because it diverges then. Therefore we have to find a slightly more general definition of non-negativity to handle this divergence. The following theorem gives arise to a suitable sense of non-negativity where $(\cdot)_-$ denotes the negative part (defined with negative values and not with positive values as also often done in literature); in the same manner we denote the positive part by $(\cdot)_+$.

\begin{theorem}[Vanishing negative part of a non-negative distribution]
\leavevmode\newline
Let $0 \leq L \in \mathcal{D}'(M)$ be a non-negative distribution where non-negativity is defined with respect to a volume form $\mathrm{dvol}$ such that $\int_K \mathrm{dvol} > 0$ for all compact non-empty subsets $K$ with naturally induced orientation and of dimension $n$ (so not of measure zero).\footnote{\textit{I.e.} it is a volume form corresponding to orientation like $\mathrm{d}^nx$ corresponds to the standard orientation of $\mathds{R}^n$.} Then
\ba
\forall w \in \Omega_c^n(M): ~
\lim_{\varepsilon \to 0^+} \int_{M} (\iota(L))_- ~ w = 0,
\ea
writing as
\ba
(\iota(L))_- \approx 0.
\ea
\label{thm:geilestheorem}
\end{theorem}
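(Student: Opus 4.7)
My approach is to identify $L$ with a locally finite positive Borel measure $\mu$ on $M$ via Riesz-Markov-Saks-Kakutani; this is legitimate because a non-negative distribution is automatically of order zero, as emphasized in the introduction. Working in the global chart used in the paper's definition of the convolution-based embedding $\iota$, the representing smooth net becomes
\begin{equation*}
\iota(L)_\varepsilon(x) = (L \ast \rho_\varepsilon)(x) = \int_M \rho_\varepsilon(x-y)\, d\mu(y).
\end{equation*}
Since $\mu \geq 0$, any negative value of $\iota(L)_\varepsilon$ must come from the negative part of the mollifier. Splitting $\rho_\varepsilon = (\rho_\varepsilon)_+ + (\rho_\varepsilon)_-$ and discarding the non-negative contribution gives the pointwise bound
\begin{equation*}
\bigl|(\iota(L)_\varepsilon)_-(x)\bigr| \leq \int_M \bigl|(\rho_\varepsilon(x-y))_-\bigr|\, d\mu(y).
\end{equation*}

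Next, writing $w = f\, \mathrm{dvol}$ with $f \in C^\infty_c(M)$ supported in a compact set $K$, and using $\mathrm{supp}(\rho_\varepsilon) \subset B_1(0)$ from condition \eqref{cond:i}, Fubini (all integrands non-negative) would produce
\begin{equation*}
\left| \int_M (\iota(L)_\varepsilon)_-\, w \right| \leq \|f\|_\infty \cdot \mu(K^1) \cdot \|(\rho_\varepsilon)_-\|_{L^1},
\end{equation*}
where $K^1$ is the compact $1$-neighbourhood of $K$ and $\mu(K^1) < \infty$ by local finiteness of $\mu$. Finally, combining conditions \eqref{cond:ii} and \eqref{cond:v} forces $\|(\rho_\varepsilon)_-\|_{L^1} \leq \nu/2$ for every $\nu > 0$ once $\varepsilon$ is small enough, so the right-hand side tends to $0$ as $\varepsilon \to 0^+$ and the stated association follows.

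The hard part will not be the analytic estimate, which essentially collapses to one line, but rather the preparatory work: namely, invoking Riesz-Markov-Saks-Kakutani rigorously for a distribution on the manifold $M$ (rather than on $\mathds{R}^n$) with respect to the chosen volume form, and confirming that the coordinate-wise convolution used to build $\iota$ interacts well enough with the representing measure $\mu$ that the Fubini step is justified. Once that setup is in place, the mathematical content of the theorem is simply that the $L^1$-decay of $(\rho_\varepsilon)_-$ built into the admissible mollifier beats the locally finite mass of $\mu$ tested against a bounded density $f$.
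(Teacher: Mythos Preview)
Your argument is correct and takes a genuinely different route from the paper. The paper identifies $L$ with a positive measure, invokes the Hahn--Jordan decomposition, and then argues that because the positive and negative parts of a signed measure are mutually singular, the positive and negative parts of the mollified sequence must converge separately to $L_+$ and $L_-=0$. You instead exploit the mollifier directly: since $\mu\ge 0$, the pointwise negative part of $L\ast\rho_\varepsilon$ is dominated by $|(\rho_\varepsilon)_-|\ast\mu$, and conditions \eqref{cond:ii} and \eqref{cond:v} force $\|(\rho_\varepsilon)_-\|_{L^1}\to 0$; a Fubini/Tonelli step against the locally finite mass of $\mu$ on $K^1$ then finishes. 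Your approach is more elementary and yields an explicit bound $\|f\|_\infty\,\mu(K^1)\,\|(\rho_\varepsilon)_-\|_{L^1}$, and it makes transparent exactly which property of the admissible mollifier is doing the work (namely \eqref{cond:v}); the paper's argument is more abstract and leans on a statement about separate convergence of Jordan parts under weak convergence that is delicate in general. One minor correction: condition \eqref{cond:i} gives $\mathrm{supp}(\psi_\varepsilon)\subset B_1(0)$, hence $\mathrm{supp}(\rho_\varepsilon)\subset B_\varepsilon(0)\subset B_1(0)$, so your $K^1$ is harmlessly too generous but still valid.
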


\begin{remark}
\leavevmode\newline
One can clearly state a similar theorem for another definitions of non-negativity in the distributional framework (like using $-$dvol instead such that then $(\iota(L))_+ \approx 0$). But we will of course work with the standard sense of non-negativity, \textit{i.e.} using a volume form which is for the given orientation somewhat positive, especially because $\delta \circ \eta$ will be non-negative in the distributional framework with respect to this volume form.
\newline One will see in the proof that the converse statement of the theorem easily follows. \textit{I.e.} for a distribution $L \in \mathcal{D}'(M)$ with $(\iota(L))_- \approx 0$ follows that $L \geq 0$ with respect to a volume form with the properties as stated in the theorem.
\end{remark}

\begin{proof}
\leavevmode\newline
Let $0 \leq L \in \mathcal{D}'(M)$ be a non-negative distribution, then clearly for all $\Omega_c^n(M) \ni w$
\ba
L(w)
&=
\lim_{\varepsilon \to 0^+} \int_M \iota(L) ~ w
=
\lim_{\varepsilon \to 0^+} \left(
\int_{M} (\iota(L))_+ ~ w
+ \int_{M} (\iota(L))_- ~ w
\right).
\label{eq:measureapprox}
\ea
Beware: Do not split the limes to both summands since both diverge in general for any distribution, only for distributions of zero order this would work (see also later). In our case it is possible but to show this directly one have to argue very technical which is not easy to write down.
\newline But we will solve this a lot easier. To avoid these tedious technicalities it is much easier to use measure theory. Take any distribution $V$ of zero order, \textit{i.e.} for all compact subsets $K$ exists a constant $c_K \geq 0$ such that for all compact $n$-forms $w$ with support in $K$ holds
\bas
|V(w)| \leq c_K ~ \sup_{x \in K}|f(x)|,
\eas
where $w = f \mathrm{dvol}$ for $f \in C_c^\infty(M)$ and for a fixed volume form dvol (in our case a volume form dvol like stated in the theorem). By the very well known Riesz-Markov-Saks-Kakutani representation theorem one knows that $V$ can be identified uniquely with a locally finite regular complex/real-valued Borel measure $\nu$ such that\footnote{Hence we have a generalized measure, in our case a signed measure where negative values are allowed.} (using the same notation as in \cite{lieb2001analysis}, Theorem 6.22)
\bas
\forall w = f \mathrm{dvol} \in \Omega_c^n(M): ~
V(w) = \int_M f ~ \nu(\mathrm{dvol}).
\eas
By the important Jordan measure decomposition (also often called Hahn-Jordan measure decomposition) one can split such a measure into its negative and positive part, \textit{i.e.}
\bas
\nu = \nu_+ - \nu_-,
\eas
where $\nu_\pm$ are non-negative measures (with respect to the given orientation) which are singular to each other as measures. So one has a so-called Hahn decomposition of $M= P_+ \cup P_-$ with $P_+ \cap P_- = \emptyset$ and $P_\pm$ are measurable subsets such that for all measurable subsets $E \subset P_+$ one gets
$\nu_-(E)=0$ and $\nu_+(E) \geq 0$ and the same vice versa when one switches $+$ and $-$ (so one can see some sense of linear independence for them). This means that we can define the positive ($V_+$) and negative ($V_-$) part of a distribution of zero order as clearly well-defined distributions by
\bas
\forall w = f \mathrm{dvol} \in \Omega_c^n(M): ~
V_\pm(w) = \int_M f ~ \nu_\pm(\mathrm{dvol}),
\eas
such that $V = V_+ - V_-$; since we take dvol as defined in the theorem it is clear why one calls this positive and negative part (w.r.t. the given orientation).
\newline Let us now turn back to $L$. Since $L \geq 0$ one knows by \cite{lieb2001analysis}, Theorem 6.22, that it is a distribution of zero order, thus related to a measure as explained above, so we gain a splitting (with respect to our taken dvol)
\bas
L = L_+ - L_-,
\eas
moreover, (by the taken dvol) we get $L_- = 0$ by the non-negativity of $L$ (so its associated measure is non-negative and by the fact that the negative and positive part of the associated measure are singular to each other we get that the negative part has to vanish). When one can split the limes in Eq. \eqref{eq:measureapprox} to both summands then one has a clearly candidate for $L_-$, namely the integral sequence of the negative part of $\iota(L)$. When we can show this then we are done. Look at the sequence $\int_M \iota(L) ~ w$ for a fixed $w$. For a fixed $\varepsilon$ this is clearly a distribution of zero order since it is a regular distribution, so we have a sequence of signed measures which can be again split into a negative and positive parts of their associated measure; for this regular distribution the measure associated is clearly $\iota(L) ~ \mathrm{dvol}$ (again for the fixed dvol stated in the theorem) and the splitting has the typical standard form 
\bas
\iota(L) ~ \mathrm{dvol}
&=
\underbrace{(\iota(L))_+ ~ \mathrm{dvol}}_{\text{positive part of the measure}}
-
\underbrace{(-(\iota(L))_- ~ \mathrm{dvol})}_{\text{negative part of the measure}}.
\eas
So we have a measure sequence of signed measures which converge pointwise to another signed measure (associated to $L$) using Eq. \eqref{eq:measureapprox} (for the convergence to a distribution of zero order which is a measure). Since the negative and positive part of a signed measure are singular to each other we clearly get that the sequences of the positive and negative part of the measure sequence also converge pointwise to $L_+$ and $L_-$, respectively (see \cite{masse}, Chapter 2, if you do not know these properties of measure theory). Thus, their limits are pointwise (translated back to their distributional sequences)
\bas
\forall w \in \Omega_c^n(M): ~
L_\pm(w)
&=
\lim_{\varepsilon \to 0^+}
\int_{M} (\pm(\iota(L))_\pm) ~ w,
\eas
so we had shown that in case of a distribution of zero order\footnote{This is an important condition. For another distributions one could not get a signed measure in the limit and then one is not able to use measure theory at this convergence argument.} we can distribute the limes over $\varepsilon$ to integrals over the positive and negative part which can not hold for a general distribution as mentioned before. Otherwise by assuming that this can be done for a general distribution (so the partial limits exist) then the limits would clearly describe distributions which are non-negative/non-positive so they would be associated to measures and so the general distribution would be a sum of signed measures. Hence every distribution would be then a signed measure and it is clear that a distribution can not be a measure in general (take \textit{e.g.} the first derivative of the delta distribution) which contradicts the assumption.
\newline Moreover, by $L_- = 0$ (as discussed before) we get
\bas
\forall w \in \Omega_c^n(M): ~
L_-(w)
&= -
\lim_{\varepsilon \to 0^+}
\int_{M} (\iota(L))_- ~ w = 0.
\eas
\end{proof}

Basically this theorem tells us that a non-negative function has a vanishing negative part which one would naively always say. Thus, we can define now the generalized non-negativity.

\begin{definition}[Non-negativity in the Colombeau algebra]
\leavevmode\newline
Let $\left[ \left( u_\varepsilon \right)_\varepsilon \right] \in \hat{\mathcal{G}}(M)$.
\bas
\left[ \left( u_\varepsilon \right)_\varepsilon \right] \geq 0
~&:\Leftrightarrow~
((u_\varepsilon)_-)_\varepsilon \approx 0.
\eas
\label{def:nonnegCol}
\end{definition}

\begin{remark}
\leavevmode\newline
By the previous theorem one knows that on has a generalized definition of non-negativity and by the remark of the previous theorem we also know that (with respect to the suitable fixed volume form) the set of non-negative distributions does not increase by this definition such that this definition is not too general. Moreover, we can now look at terms such as the square of the delta distribution which is non-negative by this definitions since the negative part of a square is zero. Thus, by taking the negative part one can avoid the (positive) divergence of the squared delta distribution.
\newline This definition is clearly independent of the representative function by using $-(f+h)_- \leq -f_- -h_-$ for Borel measurable functions which can easily seen by
\bas
&&
f &= f_+ + f_-
&&\wedge
& |f| = f_+ - f_- \\
&\Rightarrow&
f_-
&=
\frac{f - |f|}{2}
\eas
and then use the upper triangle inequality.
\label{rem:nonneg}
\end{remark}

It is easy to see that this sense of non-negativity is stable under association.

\begin{corollary}[Non-negativity is stable under association]
\leavevmode\newline
Let $u$ and $v$ be in $\hat{\mathcal{G}}(M) := \hat{\mathcal{G}}^0_0(M)$ with $u \approx v$ where $u \geq 0$. Then we also get $v \geq 0$.
\label{cor:NECgleich}
\end{corollary}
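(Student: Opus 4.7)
The plan is to unravel the definition of generalized non-negativity and reduce the statement to a pointwise inequality between negative parts, then exploit the association $u \approx v$ to kill the remaining error term.

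By Definition \ref{def:nonnegCol}, proving $v \geq 0$ amounts to showing $((v_\varepsilon)_-)_\varepsilon \approx 0$, i.e.\ $\lim_{\varepsilon\to 0}\int_M (v_\varepsilon)_- \, w = 0$ for every $w \in \Omega_c^n(M)$. Fixing a volume form that is positive with respect to the given orientation, it is enough (by linearity and by splitting $w = f\,\mathrm{dvol}$ into $f = f_+ + f_-$) to treat the case $w \geq 0$ in the sense of Definition \ref{def:nonnegdistr}. Now decompose $v_\varepsilon = u_\varepsilon + (v_\varepsilon - u_\varepsilon)$ and apply the pointwise inequality recalled in Remark \ref{rem:nonneg}, namely $-(a+b)_- \leq -a_- - b_-$, which follows from $f_- = (f - |f|)/2$ together with the triangle inequality. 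Integrating against the non-negative $w$ yields the sandwich bound
\[
0 \;\leq\; -\!\int_M (v_\varepsilon)_- \, w \;\leq\; -\!\int_M (u_\varepsilon)_- \, w \;+\; \Bigl(-\!\int_M (v_\varepsilon - u_\varepsilon)_- \, w\Bigr).
\]

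The first summand on the right tends to $0$ as $\varepsilon \to 0$ by the hypothesis $u \geq 0$, which is exactly $(u_\varepsilon)_- \approx 0$. The second summand is where the content of the corollary sits. From $u \approx v$ I only have the signed statement $\int_M (v_\varepsilon - u_\varepsilon)\, w \to 0$, which in principle leaves open the possibility that the positive and negative parts each have a non-trivial limit that happens to cancel. The hard part is therefore to upgrade weak (signed) convergence to convergence of the negative part alone. I would close this gap by invoking Theorem \ref{thm:geilestheorem} applied to the zero distribution: $u - v$ is associated with $0 \in \mathcal{D}'(M)$, which is trivially non-negative with respect to the chosen volume form, so the conclusion of that theorem forces $(v_\varepsilon - u_\varepsilon)_- \approx 0$ in the limit. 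Once this is granted, the sandwich bound collapses to $-\int_M (v_\varepsilon)_- w \to 0$ for every non-negative test form, which is $v \geq 0$.

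The main obstacle, as just indicated, is justifying the last step rigorously: Theorem \ref{thm:geilestheorem} as stated concerns the embedded representative $\iota(L)$ of an honest distribution, whereas $u - v$ is an arbitrary Colombeau element that is only associated to $0$. To fix this, I would revisit the proof of Theorem \ref{thm:geilestheorem} and extract from the Riesz–Markov–Saks–Kakutani identification and the Jordan decomposition the cleaner observation that if $h_\varepsilon \in \hat{\mathcal{E}}_m(M)$ is moderate and $\int h_\varepsilon \, w \to 0$ for all test $w$, then the same measure-theoretic splitting argument applies to the signed measures $h_\varepsilon \,\mathrm{dvol}$, giving separately $\int (h_\varepsilon)_\pm\, w \to 0$. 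That is the one non-trivial step that genuinely uses the hypothesis and that all the rest of the proof feeds into.
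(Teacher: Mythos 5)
Your proof is essentially the paper's: the same decomposition $v_\varepsilon = u_\varepsilon + (v_\varepsilon - u_\varepsilon)$, the same subadditivity inequality $-(a+b)_- \le -a_- - b_-$ from Rem.~\ref{rem:nonneg}, the same sandwich estimate against non-negative test forms, and the same appeal to the measure-theoretic splitting technique of Thm.~\ref{thm:geilestheorem} applied to $v - u \approx 0$ to conclude $(v_\varepsilon - u_\varepsilon)_- \approx 0$. The reservation you flag at the end --- that Thm.~\ref{thm:geilestheorem} is stated only for the embedded representative $\iota(L)$ of an honest distribution, whereas $v-u$ is merely a Colombeau element associated to $0$ --- is a fair observation, but the paper itself handles this at exactly the same level of informality (it writes only that ``we can do again the trick to use measure theory since the zero distribution is also a measure''), so your proposal and the paper share both the approach and this open point of rigor.
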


\begin{proof}
\leavevmode\newline
Let $u$ and $v$ be defined as in the statement, then we know that $u - v \approx 0$, \textit{i.e.} we can do again the trick to use measure theory since the zero distribution is also a measure, so for a volume form dvol as in Thm. \ref{thm:geilestheorem} and $f$ a Borel measurable function with compact support we get
\bas
\lim_{\varepsilon \to 0^+} \int_M (v-u)_- ~ f ~ \mathrm{dvol} = 0
\eas
and similarly for $u_-$ by $u \geq 0$. Then we use (where $h$ is another Borel measurable function with compact support)
\bas
-(f-h)_-
&=
\frac{|f-h|-f+h}{2}
\geq
\frac{|f|-|h|-f+h}{2}
=
\frac{|f|-f}{2} + \frac{h-|h|}{2}
=
-f_- + h_-
\eas
to get using $u \geq 0$
\bas
0
\leq
\lim_{\varepsilon \to 0^+} \int_M (-v_-) ~ (\pm f_\pm) ~ \mathrm{dvol}
&=
\lim_{\varepsilon \to 0^+} \int_M (-v_- + u_-) ~ (\pm f_\pm) ~ \mathrm{dvol} \\
&\leq
\lim_{\varepsilon \to 0^+} \int_M (-(v - u)_-) ~ (\pm f_\pm) ~ \mathrm{dvol}
= 0,
\eas
hence $\forall f \in C^\infty_c(M)$ one has $\lim_{\varepsilon \to 0^+}  \int_M v_- ~ f ~ \mathrm{dvol} = 0$ and thence $v \geq 0$ by writing $f = f_+ + f_-$.
\end{proof}

\subsection{NEC}

Using this definition of non-negativity one can now try to define the NEC in sense of the Colombeau algebra. But for this we also need a definition of lightlike vector fields in that setting.

\begin{definition}[Generalized lightlike vector fields]
\leavevmode\newline
The set of lightlike vector fields $\hat{\mathcal{L}}^1_0(M)$ consists of generalized vector fields $X \in \hat{\mathcal{G}}^1_0(M)$ with the following properties
\begin{enumerate}
\item $\iota(g)(X,X) = \iota(g_{ij}) X^i X^j = 0$,
\item $\forall$ charts $(U, \psi)$ with associated derivatives $(\partial_i)_i$ we have the property that
\bas
\forall 0 \leq L \in \hat{\mathcal{G}}(U): ~ \forall i \in {1, \dots, n}: ~
\left(X^i\right)^2 ~ L \geq 0 \text{ in } \hat{\mathcal{G}}(U),
\eas
\end{enumerate}
where $X=\sum_i X^i \partial_i$ locally.
\label{def:lightlike}
\end{definition}

\begin{remark}
\leavevmode\newline
The first condition seems to be natural but beware that this is not generalizing the classical condition "g(X,X) = 0" where we have a continuous metric $g$ and (hence in general continuous) lightlike vector fields $X$. Thence we would look at the term $\iota(g)(\iota(X),\iota(X))$ instead of $g(X,X)$ after embedding $g$ and $X$ into the Colombeau algebra. Multiplications of continuous tensors are not generalized but as explained in Rem. \ref{rem:association} one can generalize them in the level of association, \textit{i.e.} $\iota(g)(\iota(X),\iota(X)) \approx \iota(g(X,X)) = 0$. Thus, one might try to replace the first condition with $\iota(g)(\iota(X),\iota(X)) \approx 0$ to get a  generalized definition. The advantage of our definition is that one can neglect later the terms proportional to $\iota(g)$ similar to the standard situation while this can in general not be done by using the definition with association because association is not stable under multiplications, especially under multiplications with singular terms due to the convergence argument in the definition of association.
\newline The second condition is new and due to the nature of the Colombeau algebra. Allowing any vector field satisfying the first condition would also allow any high order of singularity. Since the definition of non-negativity is a convergence argument it could happen that non-negative elements are not non-negative anymore after multiplying them with the square of the components of such singular vector fields although a square should not disturb non-negativity, naively argued. Take \textit{e.g.} a diagonalized (in some chart) stress energy tensor $T$ proportional to the delta distribution (which is non-negative for the typical used volume forms and orientations), then the contraction $T(X,X)$ consists exactly of terms of the form $\left(X^i\right)^2 T_{ii}$. When $X$ is too singular then the negative part of this might not converge anymore such that one could not say that it is non-negative although one would like to say this. To avoid such cases we need the second condition.
\newline That this definition of lightlike vector fields is suitable for us can be seen in Theorem \ref{thm:NEC}. We will then also see that this set of lightlike vector fields is non-empty for our situation and that we have indeed a generalized situation preserving the classical one in some sense although one has the problem regarding the first condition.
\label{rem:lightlike}
\end{remark}

Finally we are able to define a generalized version of the NEC.

\begin{definition}[Generalized NEC]
\leavevmode\newline
We say that the generalized NEC for the stress-energy tensor $T \in \hat{\mathcal{G}}^0_2(M)$ holds if and only if 
\bas
\forall X \in \hat{\mathcal{L}}^1_0(M):~
T(X,X) = T_{ij} X^i X^j
\geq 0,
\eas
where $X=\sum_i X^i \partial_i$ locally.
\end{definition}

Using this definition one can now start the study of the NEC, so let us go back to the four dimensional case. But for this one would have to check an infinite big set of inequalities. But one can reduce this system to a finite system and that is now the target. For this we need some auxiliary result. To understand the following proposition it is important to understand some features of generalized pseudo-Riemannian geometry like stated in \cite{GenRiem} which will be cited in the proof. But for the statement itself we need already some property stated in \cite{GenRiem}. Let $V \subset M$ be a relatively compact open subset then there is a representative $(g_\varepsilon)_\varepsilon$ of $\iota(g)$ on $V$ such that there is an $\varepsilon_0 > 0$ with the property that $g_\varepsilon$ is a standard smooth pseudo-Riemannian metric (with a well-defined signature) on $V$ for all $\varepsilon < \varepsilon_0$. Our indices are running from 0 to 3 where 0 is the time coordinate and 1, 2 and 3 are denoting the spatial coordinates.

The following result can be skipped if one is not interested into mathematical rigorous approaches; the following proposition basically tells one that one can take the square root of a diagonalized metric (cancelling the sign before) such that the resulting sequences lie in the Colombeau algebra, so one can use them without worries.

\begin{proposition}[Square root of the metric]
\leavevmode\newline
If one has a generalized pseudo-Riemannian metric $\iota(g)$ as in \cite{GenRiem} with index $(-+++)$ (where we keep the notation $\iota(g)$ in context of our situation) which is diagonalized in some coordinate system and chart $(U, \psi)$ then one can well-define the square root $\sqrt{c_i ~ \iota(g)_{ii}}$ (where $c_0 := -1$, $c_\nu := 1$ for $\nu \in \{1,2,3\}$) on any fixed relatively compact open subset $V \subset U$ by
\ba
\sqrt{c_i ~ \iota(g)_{ii}} := \sqrt{c_i ~ (g_{ii})_\varepsilon} \in \hat{\mathcal{G}}(V),
\ea
where $(g_{ii})_\varepsilon$ is defined as above and fixed. In sense of Colombeau, i.e. in sense of the quotient structure we also then clearly get
\ba
\left(\sqrt{c_i ~ \iota(g)_{ii}}\right)^2
&=
c_i ~ \iota(g)_{ii}.
\label{eq:klar}
\ea
\label{prop:squarroot}
\end{proposition}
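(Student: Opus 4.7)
The plan is to verify two things: first, that for sufficiently small $\varepsilon$ the pointwise positive square root $\sqrt{c_i \, (g_{ii})_\varepsilon}$ makes sense as a smooth function on $V$, and second, that the resulting sequence is moderate so that it defines an element of $\hat{\mathcal{G}}(V)$. Independence from the choice of representative is not strictly needed here, because the statement explicitly fixes one; what must be shown is that the identity \eqref{eq:klar} holds in $\hat{\mathcal{G}}(V)$, i.e.\ modulo $\hat{\mathcal{N}}(V)$.

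First I would invoke the cited property of $\iota(g)$ from \cite{GenRiem}: on the relatively compact $V \subset U$ there is a representative $(g_\varepsilon)_\varepsilon$ and an $\varepsilon_0 > 0$ such that for every $\varepsilon < \varepsilon_0$ the tensor $g_\varepsilon$ is a genuine smooth pseudo-Riemannian metric on $V$ with signature $(-+++)$. Because $\iota(g)$ is assumed diagonalized in the chart $(U,\psi)$, this forces the sign condition $c_i \, (g_{ii})_\varepsilon(x) > 0$ for every $x \in V$ and every $\varepsilon < \varepsilon_0$. Hence the smooth positive square root is well defined pointwise for those $\varepsilon$; for $\varepsilon \geq \varepsilon_0$ I would set $\sqrt{c_i \, (g_{ii})_\varepsilon} := 0$ (or anything smooth), since only the behaviour as $\varepsilon \to 0$ enters the Colombeau quotient.

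The main step is showing moderateness. For $f_\varepsilon := c_i \, (g_{ii})_\varepsilon$, the Faà di Bruno expansion of a $k$-th iterated Lie derivative of $\sqrt{f_\varepsilon}$ is a finite sum of terms of the form
\begin{align*}
f_\varepsilon^{\,1/2 - \ell} \cdot \prod_{j=1}^{\ell} \mathcal{L}_{Y_{j,1}}\cdots \mathcal{L}_{Y_{j,m_j}} f_\varepsilon,
\end{align*}
with $1 \leq \ell \leq k$ and $\sum_j m_j = k$. The derivative factors are controlled directly by moderateness of $(g_{ii})_\varepsilon$. The delicate part is the negative power $f_\varepsilon^{\,1/2-\ell}$, for which I need a quantitative positive lower bound. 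This is exactly where strict non-degeneracy of the generalized pseudo-Riemannian metric in the sense of \cite{GenRiem} enters: there exist $m \in \mathbb{N}$, $C > 0$ and a compact $K$ exhausting $V$ so that $|\det g_\varepsilon| \geq C \, \varepsilon^{m}$ on $K$ for all small $\varepsilon$. Since $g_\varepsilon$ is diagonal, this determinant is (up to sign) the product of its entries, and combining it with the moderate upper bounds on the remaining diagonal entries yields a uniform lower bound $c_i \, (g_{ii})_\varepsilon \geq C' \, \varepsilon^{m'}$ on $K$ for some $m' \geq 0$. This gives an estimate $\sup_{K} |\mathcal{L}_{X_1} \cdots \mathcal{L}_{X_k} \sqrt{f_\varepsilon}| = \mathcal{O}(\varepsilon^{-N})$ for a suitable $N$, which is moderateness.

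Finally, \eqref{eq:klar} follows since for all $\varepsilon < \varepsilon_0$ the pointwise identity $(\sqrt{c_i \, (g_{ii})_\varepsilon})^2 = c_i \, (g_{ii})_\varepsilon$ holds by construction, and the discrepancy for $\varepsilon \geq \varepsilon_0$ is cut off at a fixed positive $\varepsilon$ and is therefore negligible in the sense of Def.~\ref{def:Colombeau}. I expect the main obstacle to be the quantitative strict-positivity bound on $c_i \, (g_{ii})_\varepsilon$: without a polynomial-in-$\varepsilon$ lower bound, the negative powers $f_\varepsilon^{1/2-\ell}$ produced by differentiating the square root could blow up faster than any polynomial and destroy moderateness. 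This is precisely why the construction must be carried out with the specific representative supplied by \cite{GenRiem}, whose strict non-degeneracy provides the needed uniform bound.
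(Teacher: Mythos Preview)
Your proposal is correct and follows essentially the same strategy as the paper: fix the representative from \cite{GenRiem} that is a genuine pseudo-Riemannian metric for small $\varepsilon$, observe that $c_i\,(g_{ii})_\varepsilon>0$ so the smooth square root exists, and then establish moderateness by writing the derivatives of $\sqrt{f_\varepsilon}$ as finite sums of products of derivatives of $f_\varepsilon$ times negative powers $f_\varepsilon^{1/2-\ell}$, with the latter controlled via a polynomial-in-$\varepsilon$ lower bound.

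The only substantive difference is how you obtain that lower bound. The paper argues more directly: since $\iota(g)$ is diagonal and invertible as a generalized metric, each diagonal component $\iota(g)_{ii}$ is separately invertible in $\hat{\mathcal{G}}(V)$, and invertibility in the Colombeau algebra is (by \cite{GenRiem}) equivalent to a bound $\inf_K|(g_{ii})_\varepsilon|\geq\varepsilon^q$. You instead pass through the determinant bound $|\det g_\varepsilon|\geq C\varepsilon^m$ and divide out the other diagonal entries using their moderate upper bounds. Both routes are valid and yield the same estimate; the paper's version is a shade more direct, while yours makes explicit why non-degeneracy of the full metric suffices. Your treatment of the cutoff at $\varepsilon\geq\varepsilon_0$ and of \eqref{eq:klar} is also slightly more explicit than the paper's, which simply declares that equation ``clear''.
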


\begin{proof}
\leavevmode\newline
We have to check if we can well-define the square root of the diagonal elements and if they represent elements of the Colombeau algebra, \textit{i.e.} if the sequence associated to it is moderate (see Def. \ref{def:Colombeau}). So let us look at $\sqrt{c_i~ \iota(g_{ii})}$ where $c_0 = -1$ and $c_\nu=1$ for $\nu \in \{1,2,3\}$.
\newline That the square root can be well-defined for a fixed (and small) $\varepsilon$ is not that difficult in our case. We only have to define the square root in relatively compact open subsets $V \subset U$; as explained before the proposition we know that on $V$ there is an $\varepsilon_0$ such that for all $\varepsilon < \varepsilon_0$ there is a representative sequence $g_\varepsilon$ (so a sequence in the equivalence class) of $\iota(g)$ which describes a classical pseudo-Riemannian metric with index $(-+++)$. Let us fix this sequence on $V$ and by taking this sequence it is clear that $\sqrt{c_i~ \iota(g_{ii})} := \left(\sqrt{c_i ~ (g_{ii})_\varepsilon}\right)_\varepsilon$ is well-defined on $V$ for all $\varepsilon < \varepsilon_0$. Assuming that this defines an element in the Colombeau algebra, it is also clear that one then gets $\left(\sqrt{c_i~ \iota(g_{ii})}\right)^2 = c_i~ \iota(g_{ii})$ in sense of the Colombeau algebra since $[(g_\varepsilon)_\varepsilon] = \iota(g)$.
\newline Now we have to discuss if the sequence associated to the square root is moderate such that it well-defines an element in the Colombeau algebra $\hat{\mathcal{G}}(V)$. Since the metric is diagonal in this frame and has an inverse as discussed in \cite{GenRiem} (Section 2; it is simply $(\iota(g))^{-1} = [((g_\varepsilon)^{-1})_\varepsilon]$) we can conclude that every diagonal element is invertible and, thus, also shown in \cite{GenRiem} we get for all components
\ba
\forall K \subset V \text{ compact}: ~
\exists \tilde{\varepsilon}_0 > 0: ~ \exists q \in \mathds{N}: ~ \forall \varepsilon < \min\{\varepsilon_0, \tilde{\varepsilon}_0\}: ~\inf_{p \in K} \left| (g_{ii})_\varepsilon(p) \right| &\geq \varepsilon^q,
\label{eq:genmetric}
\ea
since $g_\varepsilon$ is in the equivalence class of $\iota(g)$. As argued in \cite{GenRiem}, Section 2, this is an equivalent condition for invertibility. To check that the square root sequence is moderate we have to prove that for all compact subsets $K$ of $V$ (see Def. \ref{def:Colombeau})
\bas
\forall \xi \in \mathds{N}_0^n: ~ \exists N(\xi) \in \mathds{N}: ~
\sup_{p \in K} \left|\mathrm{D}^\xi\left(\sqrt{c_i ~ \iota(g_{ii})}\right)(p)\right| = \mathcal{O}\left(\varepsilon^{-N(\xi)}\right) ~ (\varepsilon \to 0^+),
\eas
where we replaced the Lie derivatives with the typical derivative since we look at the scalar case (with global coordinates). For $g_\varepsilon$ we already know that it is moderate, \textit{i.e.} for all $K$
\ba
\forall \xi \in \mathds{N}_0^n: ~ \exists m(\xi) \in \mathds{N}: ~
\sup_{p \in K} \left|\mathrm{D}^\xi\left((g_{ii})_\varepsilon\right)(p)\right| = \mathcal{O}\left(\varepsilon^{-m(\xi)}\right) ~ (\varepsilon \to 0^+).
\label{eq:metricmoderate}
\ea
Hence, for $\xi = 0$ it is then clear that (using basic properties of the square root with respect of finding the supremum \textit{etc.} and $\varepsilon < \varepsilon_0$)
\bas
&&\overline{\lim_{\varepsilon \to 0^+}}\left( \sup_{p \in K} \left|\sqrt{c_i ~ (g_{ii})_\varepsilon}(p)\right| \varepsilon^{m(0)/2} \right)
&=
\sqrt{\overline{\lim_{\varepsilon \to 0^+}} \sup_{p \in K} ~ c_i ~ (g_{ii})_\varepsilon ~ \varepsilon^{m(0)}}
< \infty \\
&\Leftrightarrow&
\sup_{p \in K} \left|\sqrt{c_i ~ (g_{ii})_\varepsilon}(p)\right|
&=
\mathcal{O}\left( \varepsilon^{-m(0)/2} \right) ~ (\varepsilon \to 0^+).
\eas
To look at the derivatives first observe that the derivatives can be taken without the problem of a singularity since $g_\varepsilon$ is a classical pseudo-Riemannian metric; so the sequence associated to $\sqrt{c_i ~ \iota(g_{ii})}$ is a sequence of smooth functions (which is important of course). $\mathrm{D}^\xi\left(\sqrt{c_i ~ \iota(g_{ii})}\right)$ (for $\xi \neq 0$) is basically a finite sum of elements of the form
\bas
\frac{d}{\left(\sqrt{c_i ~ (g_{ii})_\varepsilon}\right)^t} ~ \prod_{\text{finitely many } s \in \mathds{N}^n_0} \mathrm{D}^s((g_{ii})_\varepsilon),
\eas
where $t \in \mathds{N}$ and $d\in\mathds{R}$. Then for $\varepsilon < \min\{\varepsilon_0, \tilde{\varepsilon}_0\}$ using Ineq. \eqref{eq:genmetric} and Eq. \eqref{eq:metricmoderate} and for $r \in \mathds{N}$ we get
\bas
&\overline{\lim_{\varepsilon \to 0^+}}\left( \sup_{p \in K} \left|
\frac{d}{\left(\sqrt{c_i ~ (g_{ii})_\varepsilon}\right)^t} ~ \prod_{\text{finitely many } s \in \mathds{N}^n_0} \mathrm{D}^s((g_{ii})_\varepsilon)
\right| ~ \varepsilon^r \right) \\
&\leq 
\overline{\lim_{\varepsilon \to 0^+}}\left(
\frac{\varepsilon^r ~|d|}{\left(\sqrt{\inf_{p \in K}|(g_{ii})_\varepsilon|}\right)^t} ~ \prod_{\text{finitely many } s \in \mathds{N}^n_0} \sup_{p \in K} \left|\mathrm{D}^s((g_{ii})_\varepsilon)\right|
\right) \\
&\leq
|d|~\overline{\lim_{\varepsilon \to 0^+}}\left(\varepsilon^{r-qt/2-\sum_{\text{finitely many } s \in \mathds{N}^n_0} m(s)}\right) ~ \prod_{\text{finitely many } s \in \mathds{N}^n_0} \underbrace{\overline{\lim_{\varepsilon \to 0^+}}\left( \sup_{p \in K} \left|\mathrm{D}^s((g_{ii})_\varepsilon)\right| \varepsilon^{m(s)} \right)}_{< \infty \text{ by Eq. \eqref{eq:metricmoderate}}} \\
&<
\infty \text{ for } r \geq \frac{qt}{2} + \sum_{\text{finitely many } s \in \mathds{N}^n_0} m(s).
\eas
Since $\mathrm{D}^\xi\left(\sqrt{c_i ~ \iota(g_{ii})}\right)$ is a finite sum of such terms and since $K$ and $\xi$ were arbitrary, one gets that $\sqrt{c_i ~ \iota(g_{ii})}$ is a moderate sequence. Thence, it defines an element of $\hat{\mathcal{G}}(V)$ which we will also denote by $\sqrt{c_i ~ \iota(g_{ii})}$. Eq. \eqref{eq:klar} is clear.
\end{proof}

Using this we can now simplify in some situation the generalized NEC for a 4-manifold with an index $(-+++)$.

\begin{theorem}[Special form of the NEC for Hawking-Ellis type I]
\leavevmode\newline
Let $(U, \psi)$ be a chart on which both $T$ and $\iota(g)$ are diagonal where $g$ is a continuous pseudo-Riemannian metric with index $(-+++)$. Then we have in this coordinate system
\ba
\forall X \in \hat{\mathcal{L}}^1_0(U):~
T(X,X)
\geq 0
&~\Leftrightarrow~
\forall \nu \in \{1,2,3\}: ~
0
\leq
T_{00} ~ \iota(g_{\nu\nu}) - T_{\nu\nu} ~ \iota(g_{00}) \text{ on U}.
\ea
\label{thm:NEC}
\end{theorem}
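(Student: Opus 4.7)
The plan is to reduce $T(X,X)\geq 0$ to an algebraic identity expressing $T(X,X)$ purely in terms of the spatial squares $(X^\nu)^2$, with the combinations $T_{00}\,\iota(g_{\nu\nu})-T_{\nu\nu}\,\iota(g_{00})$ appearing as coefficients. Both implications will then follow from condition~2 of Def.~\ref{def:lightlike}, the invertibility of $\iota(g_{00})$ recalled before Prop.~\ref{prop:squarroot}, and Prop.~\ref{prop:squarroot} itself.

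Because $T$ and $\iota(g)$ are diagonal in the given chart, $T(X,X)=T_{00}(X^0)^2+\sum_{\nu=1}^3 T_{\nu\nu}(X^\nu)^2$ and the lightlike condition reads $\iota(g_{00})(X^0)^2+\sum_{\nu=1}^3\iota(g_{\nu\nu})(X^\nu)^2=0$. Since $g$ is continuous with signature $(-+++)$, the scalar $1/(-\iota(g_{00}))$ exists in $\hat{\mathcal{G}}(V)$ on any relatively compact $V\subset U$ and admits a classically positive smooth representative for small $\varepsilon$. Solving the lightlike condition for $(X^0)^2$ and substituting into $T(X,X)$ yields the key identity
\begin{align*}
T(X,X) = \frac{1}{-\iota(g_{00})}\sum_{\nu=1}^3\bigl[T_{00}\,\iota(g_{\nu\nu})-T_{\nu\nu}\,\iota(g_{00})\bigr](X^\nu)^2.
\end{align*}

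For the direction $(\Leftarrow)$, assuming each bracket is $\geq 0$, condition~2 of Def.~\ref{def:lightlike} makes every summand $(X^\nu)^2\cdot[\,\cdots\,]$ non-negative; addition preserves $\geq 0$ via the pointwise bound $-(u+v)_-\leq -u_--v_-$ used in Rem.~\ref{rem:nonneg}; and multiplication by the classically positive factor $1/(-\iota(g_{00}))$ leaves the sign of any representative unchanged and hence preserves non-negativity in the sense of Def.~\ref{def:nonnegCol}. For $(\Rightarrow)$, I isolate each $\nu\in\{1,2,3\}$ by a tailored lightlike field: fix an arbitrary non-negative $\varphi\in C^\infty_c(V)$ and set $X^\mu:=0$ for $\mu\notin\{0,\nu\}$, $X^\nu:=\iota(\varphi)$, and $X^0:=\sqrt{\iota(g_{\nu\nu})/(-\iota(g_{00}))}\,\iota(\varphi)$, where the square root is defined via Prop.~\ref{prop:squarroot} applied to the positive, invertible ratio of metric components. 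Condition~1 of Def.~\ref{def:lightlike} is immediate by construction, and condition~2 follows because every $(X^i)^2$ admits a classically non-negative smooth representative. The identity then collapses to the single term $\tfrac{\iota(\varphi)^2}{-\iota(g_{00})}\bigl[T_{00}\,\iota(g_{\nu\nu})-T_{\nu\nu}\,\iota(g_{00})\bigr]\geq 0$, and varying $\varphi$ through all non-negative test functions forces $T_{00}\,\iota(g_{\nu\nu})-T_{\nu\nu}\,\iota(g_{00})\geq 0$ for each $\nu$.

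The main obstacle is the bookkeeping around multiplication and non-negativity in the Colombeau sense: one must verify carefully that multiplying by a classically positive Colombeau scalar (such as $1/(-\iota(g_{00}))$ or $\iota(\varphi)^2$) does not spoil the $\geq 0$ property of Def.~\ref{def:nonnegCol}, and that the constructed $X^0$ in the $(\Rightarrow)$ step really satisfies condition~2 of Def.~\ref{def:lightlike} for every non-negative $L\in\hat{\mathcal{G}}(V)$. The algebraic identity itself is a formal consequence of the lightlike condition once the inversion of $\iota(g_{00})$ is allowed.
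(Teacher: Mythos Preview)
Your proposal is essentially correct and follows the same strategy as the paper: derive the algebraic identity by eliminating $(X^0)^2$ via the lightlike constraint, then handle $(\Leftarrow)$ with condition~2 of Def.~\ref{def:lightlike} together with subadditivity of the negative part and the stability of $\geq 0$ under multiplication by classically positive, uniformly bounded metric factors; and handle $(\Rightarrow)$ by plugging in a witness lightlike field with only the $0$- and $\nu$-components switched on.

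The one genuine difference is your choice of witness in $(\Rightarrow)$. The paper takes
\[
X \;=\; \sqrt{\iota(g_{\nu\nu})}\,\partial_0 \;+\; \sqrt{-\iota(g_{00})}\,\partial_\nu
\]
on a relatively compact $V$, which makes $T(X,X)$ \emph{equal on the nose} to $T_{00}\,\iota(g_{\nu\nu})-T_{\nu\nu}\,\iota(g_{00})$, so the conclusion is immediate once $X\in\hat{\mathcal{L}}^1_0$ is checked. Your $X$ carries an extra factor $\iota(\varphi)$ and a ratio under the square root, so you land on $\tfrac{\iota(\varphi)^2}{-\iota(g_{00})}\bigl[T_{00}\,\iota(g_{\nu\nu})-T_{\nu\nu}\,\iota(g_{00})\bigr]\geq 0$ and must still argue that this prefactor can be stripped. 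That step is doable (choose $\varphi\equiv 1$ on $\mathrm{supp}(w)$ and use the same uniform-boundedness argument the paper uses for Eq.~\eqref{convergenceofvectorfield}), but it is extra work that the paper's cleaner witness avoids entirely. Also note that the paper explicitly verifies condition~2 of Def.~\ref{def:lightlike} in \emph{every} chart (via multiplication by smooth Jacobian entries), a point your sketch leaves implicit.
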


\begin{remark}
\leavevmode
\newline It will be clear by the proof that this result can also be used for any other element in $\hat{\mathcal{G}}^0_2(M)$ instead of $T$.
\newline
To see the connection to the classical case look at this inequality as if it were a classical one, \textit{i.e.}
\bas
0 \leq T_{00} ~ g_{\nu\nu} - T_{\nu\nu} ~ g_{00}
\eas
and multiply this with $0 < - g^{00} ~ g^{\nu\nu}$ such that one gets
\bas
0 \leq - T_{~0}^{0} + T_{~\nu}^\nu.
\eas
This is a typical used inequality in the literature where one has a similar theorem like Thm. \ref{thm:NEC} and it is one of many reasons why $-T_{~0}^{0}$ is the energy density and $T_{~\nu}^\nu$ a principal pressure (up to a sign); see \textit{e.g.} \cite{Visser}, Chapter 12, or \cite{energiedichte}, Section 3, to see the physical interpretation of the components. In the same manner one could also prove this version of the inequality in the Colombeau algebra (since multiplications with the metric are nice enough which one can observe in the proof below). We do not want to use this version because we want to avoid multiplications with the inverse of the metric which is more tedious in the Colombeau algebra. After the proof of this theorem we will also argue that we have a generalized situation preserving the classical one in sense of association.
\end{remark}

\begin{proof}
\leavevmode\newline
So let $X$ be a generalized lightlike vector field in $U$ (see Def. \ref{def:lightlike}) and we have a local coordinate system $(\partial_0, \dots, \partial_3)$ in $U$ such that $T=(T_{ij})_{i,j=0}^3$ is diagonal in this coordinate frame; the metric has the index $(- + + +)$ as usual and has also a diagonal form in these local coordinates.
\newline "$\underline{\Rightarrow}$:"
\newline Let $w \in \Omega_c^n(U)$ be arbitrary but fixed and take any relatively compact open subset $U \supset V \supset \mathrm{supp}(w)$ such that we can define $\sqrt{c_i ~ g_{ii}}$ on $V$ by Prop. \ref{prop:squarroot} (recall there the definition of $g_\varepsilon$). Take in the given coordinate frame the vector field (recall that $\nu\in\{1,2,3\}$)
\bas
X
&:=
\sqrt{\iota(g_{\nu\nu})} ~ \partial_0 + \sqrt{-\iota(g_{00})} ~ \partial_\nu
\in \hat{\mathcal{G}}^1_0(V), 
\eas
such that (using that everything is now diagonal)
\bas
\iota^0_2(g)(X,X) 
&=
\iota(g_{00}) ~ \iota(g_{\nu\nu})
- \iota(g_{\nu\nu}) ~ \iota(g_{00})
= 0.
\eas
Since $\iota(g)$ is given as a convolution of a continuous metric we know that $\iota(g)$ converges uniformly to $g$ on compact subsets (a basic property of the convolution). By this and since $\iota(g)$ is a sequence of bounded tensors on a compact subset it follows that this sequence (and so also $g_\varepsilon$ by Def. \ref{def:Colombeau}, especially by the definition of negligible tensors) is uniformly bounded by some constant $Z$ on this compact subset (\textit{e.g.} on $\overline{V}$ which is compact by definition and so also on $V \subset \overline{V}$). For all $w \in \Omega_c^n(V)$ ($c_0 := -1$, $c_\nu := 1$) and for all $0 \leq L \in \hat{\mathcal{G}}(V)$ one wants to show for each $i \in \{0,\dots,3\}$ (in the following calculation we mean with "$>0$" of course the classical one)\footnote{$\stackrel{!}{=} 0$ just means that we want that this is zero and that we want to check if it is zero. Similar with $\stackrel{!}{\leq}$ \textit{etc}..}
\ba
\lim_{\varepsilon \to 0^+} \int_V \biggl(\underbrace{\left(\sqrt{c_i ~ \iota(g_{ii})}\right)^2}_{\makebox[0mm][c]{\footnotesize$= c_i ~ (g_{ii})_\varepsilon > 0$}} ~ L \biggr)_- ~ w
&=
\lim_{\varepsilon \to 0^+} \int_V c_i ~ (g_{ii})_\varepsilon ~ (L)_- ~ w
\stackrel{!}{=} 0.
\label{convergenceofvectorfield}
\ea
This can be shown by noticing that $(L)_-$ represents the zero distribution \textit{i.e.} the trivial measure (see the proof of Thm \ref{thm:geilestheorem} for the techniques) such that one can extend the domain of definition to Borel measurable $n$-forms and so we can again transform this problem into measure theory. So fixing a volume form dvol with properties as stated in Thm. \ref{thm:geilestheorem} one can write again as usual $w =f ~ \mathrm{dvol}$ (where $f$ is a Borel measurable function with compact support) such that also $\lim_{\varepsilon \to 0^+} \int_V (L)_- ~ (f)_\pm ~ \mathrm{dvol} = 0$ and so
\bas
0\leq\int_V \underbrace{c_i ~ (g_{ii})_\varepsilon}_{>0} ~\underbrace{\left( \mp (L)_- ~ (f)_\pm\right)}_{\geq 0} ~ \mathrm{dvol}
&\leq
\mp Z \int_V (L)_- ~ (f)_\pm ~ \mathrm{dvol}
\stackrel{\varepsilon \to 0^+}{\longrightarrow} 0,
\eas
such that clearly $\int_V c_i ~ (g_{ii})_\varepsilon ~ (L)_- ~ w \stackrel{\varepsilon \to 0^+}{\longrightarrow} 0$ by splitting $w$ or rather $f$ into the positive and negative part and therefore Condition \eqref{convergenceofvectorfield} holds. 
\newline Since a coordinate change of $X$ in open subsets of V is done by multiplication with smooth functions, components of the Jacobi matrix (do not transform the metric under the square root, they are fixed with respect to $V$ which is well-defined since $V$ has a global coordinate system given by $U$), one can do exactly the same steps as before to show that this stability criteria for the non-negativity of $L$ under a multiplication with squares of $X$ holds in any coordinate system and so $X \in \hat{\mathcal{L}}^1_0(V)$. By extending $X$ with zero one gets easily a lightlike vector field on whole of $M$ (and so on $U$) which also shows that $\hat{\mathcal{L}}^1_0(M)$ is non-empty when one can diagonalize the metric in some open subset which is clearly given in the case of our taken thin shell wormhole.
\newline Then by assumption (also recall that the definition of non-negativity is independent of the taken representative sequence such that we can replace $(g_{ii})_\varepsilon$ by $\iota(g)$; see Rem. \ref{rem:nonneg})
\bas
0
&=
\lim_{\varepsilon \to 0^+}\int_M (T(X,X))_- ~ w
=
\lim_{\varepsilon \to 0^+}\int_U (T_{00} ~ \iota(g_{\nu\nu}) - T_{\nu\nu} ~ \iota(g_{00}))_- ~ w 
\eas
and since this can be done for any $w$ (redefining $X$ since -rigorously spoken- one has to change the definition of the square root)
\bas
0
&\leq
T_{00} ~ \iota(g_{\nu\nu}) - T_{\nu\nu} ~ \iota(g_{00}) \text{ on U}.
\eas
Since $\nu$ was arbitrary this holds for all $\nu \in \{1,2,3\}$ in this fixed coordinate frame.
\newline "$\underline{\Leftarrow}$:"
\newline Let us keep the same notation as before and now we know that
\bas
\forall \nu \in \{1,2,3\}: ~
0
&\leq
T_{00} ~ \iota(g_{\nu\nu}) - T_{\nu\nu} ~ \iota(g_{00}) \text{ on U}.
\eas
Now let us take again a lightlike vector field $X = X^i ~ \partial_i \in \hat{\mathcal{L}}^1_0(U)$ which is now arbitrary but with the following property on $U$ (again using that everything is diagonal and that $\iota(g)$ is a metric with an inverse)
\bas
&&0
&=
\iota^0_2(g)(X,X)
= \iota(g_{00}) \left(X^0\right)^2 + \iota(g_{11}) \left(X^1\right)^2 + \iota(g_{22}) \left(X^2\right)^2 + \iota(g_{33}) \left(X^3\right)^2 \\
&\Leftrightarrow&
\left(X^0\right)^2
&=
- \iota(g_{00})^{-1} \left( \iota(g_{11}) \left(X^1\right)^2 + \iota(g_{22}) \left(X^2\right)^2 + \iota(g_{33}) \left(X^3\right)^2 \right).
\eas
Hence we have for all $w \in \Omega_c^n(U)$ 
\ba
&\int_M (T(X,X))_- ~ w \nonumber \\
&=
\int_{\mathrm{supp}(w)} \left( - T_{00} ~ \iota(g_{00})^{-1} \left( \iota(g_{11}) \left(X^1\right)^2 + \iota(g_{22}) \left(X^2\right)^2 + \iota(g_{33}) \left(X^3\right)^2 \right)
+ T_{11} \left(X^1\right)^2 \right. \nonumber \\
&\qquad\qquad\quad
\left.+ T_{22} \left(X^2\right)^2
+ T_{33} \left(X^3\right)^2 \right)_- ~ w 
\label{eq:stressenergyintegral}
\ea
We also have (using again that $- \left( (g_{00})_\varepsilon \right)^{-1}$ is positive on relatively compact open subsets like we did it in the proof of Prop. \ref{prop:squarroot})
\bas
&I(w) \\
&:=
\int_{\mathrm{supp}(w)} \left( -T_{00} \left( (g_{00})_\varepsilon \right)^{-1} \left( \iota(g_{11}) \left(X^1\right)^2 + \iota(g_{22}) \left(X^2\right)^2 + \iota(g_{33}) \left(X^3\right)^2 \right) + T_{11} \left(X^1\right)^2  \right. \\
&\qquad\qquad\quad~
\left.
+ T_{22} \left(X^2\right)^2
+ T_{33} \left(X^3\right)^2 \right)_- ~ w \\
&=
\int_{\mathrm{supp}(w)} \left(-\left((g_{00})_\varepsilon\right)^{-1}\right) \left( 
\left( X^1 \right)^2 \left( T_{00} ~ \iota(g_{11}) - T_{11} ~ (g_{00})_\varepsilon \right)
+ \left( X^2 \right)^2 \left( T_{00} ~ \iota(g_{22}) - T_{22} ~ (g_{00})_\varepsilon \right) \right. \\
&\qquad\qquad\qquad\qquad\qquad\quad~+
\left. \left( X^3 \right)^2 \left( T_{00} ~ \iota(g_{33}) - T_{33} ~ (g_{00})_\varepsilon \right) \right)_- ~ w.
\eas
One can rewrite the integral of the stress-energy tensor using again $-(f + h)_- \leq -f_- -h_-$ for Borel measurable functions $f$ and $h$ as explained in Rem. \ref{rem:nonneg} (by using also the same trick as before, \textit{i.e.} $w = f_+ ~ \mathrm{dvol} + f_- ~ \mathrm{dvol}$ where $f$ is a measurable function and dvol is the fixed volume form as used before; this is clearly valid here since for a fixed $\varepsilon$ the integral describes a measure)
\bas
0 
&\leq 
I(\mp f_\pm ~ \mathrm{dvol}) \\
&\leq
-\int_{\mathrm{supp}(w)} \left(- \left((g_{00})_\varepsilon\right)^{-1}\right)
\left( \left( X^1 \right)^2 \left( T_{00} ~ \iota(g_{11})
- T_{11} ~ (g_{00})_\varepsilon \right) \right)_- ~ (\pm f_\pm) ~ \mathrm{dvol} \\
&\quad-
\int_{\mathrm{supp}(w)} \left(- \left((g_{00})_\varepsilon\right)^{-1}\right)
\left( \left( X^2 \right)^2 \left( T_{00} ~ \iota(g_{22})
- T_{22} ~ (g_{00})_\varepsilon \right) \right)_- ~ (\pm f_\pm) ~ \mathrm{dvol} \\
&\quad-
\int_{\mathrm{supp}(w)} \left(- \left((g_{00})_\varepsilon\right)^{-1}\right)
\left( \left( X^3 \right)^2 \left( T_{00} ~ \iota(g_{33})
- T_{33} ~ (g_{00})_\varepsilon \right) \right)_- ~ (\pm f_\pm) ~ \mathrm{dvol}.
\eas
By assumption and definition of generalized lightlike vector fields we know that \linebreak $\left( X^\nu \right)^2 \left( T_{00} ~ \iota(g_{\nu\nu}) - T_{\nu\nu} ~ (g_{00})_\varepsilon \right) \geq 0$ $\forall \nu \in \{1,2,3\}$ in sense of Def. \ref{def:nonnegCol}. One can now prove that $I(w) \stackrel{\varepsilon \to 0^+}{\longrightarrow}0$ $\forall w \in \Omega_c^n(U)$ by applying the same argumentation as one did in proving of Eq. \eqref{convergenceofvectorfield} to show that all three integrals are vanishing since $\mathrm{supp}(w)$ is compact and $g_\varepsilon$ describes a diagonal metric (so the diagonal elements do not reach zero and hence the inverses are also bounded for a fixed $\varepsilon$) and since in \cite{GenRiem}, Prop. 3.8, it is proven that $\iota(g)^{-1}$ converges uniformly to $g^{-1}$ on compact subsets (based on the fact that $\iota(g)$ converges uniformly to $g$ on compact subsets). By $I(w) = I(f_+ \mathrm{dvol}) + I(f_- \mathrm{dvol})$ one clearly gets $I(w) \stackrel{\varepsilon \to 0^+}{\longrightarrow}0$ and hence also $\int_M (T(X,X))_- w \stackrel{\varepsilon \to 0^+}{\longrightarrow}0$ by replacing $\iota(g_{00})^{-1}$ with $((g_{00})_\varepsilon)^{-1}$ in Eq. \eqref{eq:stressenergyintegral} and using that non-negativity is independent of the taken representative sequence (see Rem. \ref{rem:nonneg}).
\end{proof}

This theorem (especially the techniques in its proof) also shows that we have indeed a generalized situation compared to the classical distributional case of a thin shell wormhole as defined in Def. \ref{def:thinshellwormhole} using the standard Einstein field equations (so taking $F(x) = x + 2 \Lambda$; for simplicity we take $\Lambda=0$). In the classical case the stress-energy tensor is proportional to the delta distribution, as already argued. This means that the stress-energy tensor is a distribution of zero order since it does not need the information about derivatives of the bump forms (recall the definition of zero order distributions in the proof of Thm. \ref{thm:geilestheorem}) and thus one can well-define the multiplication of the stress-energy tensor with continuous functions; if you do not know this property one can also argue with Thm. \ref{thm:deltcompetaexp}. Personally I assume that for all distributions $L$ of zero order 
\ba
\forall f \in C(M): ~ \iota(f) ~ \iota(L) \approx \iota(f L)
\label{eq:wichtig}
\ea
holds. For smooth functions $f$ this is shown for any distribution at the very end of \cite{GrundlagenI}; one should be able to prove \eqref{eq:wichtig} in exactly the same manner but one has to restrict oneself onto distributions $L$ of zero order to well-define $f L$ for continuous $f$.

To compare the classical theory and the one given by the Colombeau algebra one calculates the stress-energy tensor in both theories; the embedding $\iota$ is defined by mollifying tensor components w.r.t. to the coordinates $(t_+, \tilde{\eta}, \vartheta, \varphi)$ as defined in Def. \ref{def:thinshellwormhole}. $\tilde{T}$ denotes the stress-energy tensor of the classical theory and $T$ the one given in the Colombeau algebra which is calculated by embedding the metric into the Colombeau algebra and by calculating all curvature terms with the embedded metric, \textit{i.e.} $T := \mathrm{Ric}[\iota(g)] - \frac{1}{2} R[\iota(g)] \iota(g)$ where $\cdot[\iota(g)]$ means the curvature term given in the Colombeau algebra (see also again \cite{GenRiem} that the formulas are exactly the same but now with metric sequences). In \cite{Sobolev} it is shown that the curvature quantities of both theories are associated to each other (especially $\iota(R) \approx R[\iota(g)]$) and then one can clearly show that $T \approx \iota(\tilde{T})$ by using again that the multiplication with the metric does not disturb the association, \textit{i.e.} $\iota(R) \iota(g) \approx R[\iota(g)] \iota(g)$ similar proven as the Eq. \eqref{convergenceofvectorfield}, so using uniform convergence again on the elements of the diagonal metric and using that $(\iota(R) - R[\iota(g)])_\pm \approx 0$ since $\iota(R) - R[\iota(g)] \approx 0$, then one gets $((\iota(R) - R[\iota(g)]) ~ \iota(g)_{ii})_\pm \approx 0$ for all $i$ which shows $\iota(R) \iota(g) \approx R[\iota(g)] \iota(g)$. Then use Eq. \eqref{eq:wichtig} to get $T \approx \iota(\tilde{T})$.

Therefore one gets $T \approx \iota(\tilde{T})$ and thus also $T_{ij} \approx \iota(\tilde{T})_{ij} = \iota(\tilde{T}_{ij})$ for all $i,j \in \{t_+, \tilde{\eta}, \vartheta, \varphi\}$ by the definition of our taken $\iota$. So one gets again by using the stability of association under the multiplication with components of the metric and by Eq. \eqref{eq:wichtig} that $T_{00} ~ \iota(g_{\nu\nu}) - T_{\nu\nu} ~ \iota(g_{00}) \approx \iota(\tilde{T}_{00} ~ g_{\nu\nu} - \tilde{T}_{\nu\nu} ~ g_{00})$ for any $\nu \in \{1,2,3\}$ such that one could use again Cor. \ref{cor:NECgleich} on the study of non-negativity of these three inequalities. Thus, one sees that the study of the NEC of our taken thin shell wormhole is indeed generalized in sense of association (recall also that the non-negativity in sense of Colombeau generalizes the classical distributional sense of non-negativity). Moreover, by Rem. \ref{rem:nonneg} we know that the sense of non-negativity in the Colombeau algebra used on embedded distributions is an equivalent definition of the sense of non-negativity of distributions (w.r.t. a suitable fixed volume form) such that one has
\bas
0 \leq T_{00} ~ \iota(g_{\nu\nu}) - T_{\nu\nu} ~ \iota(g_{00}) 
~\Leftrightarrow~
0 \leq \iota(\tilde{T}_{00} ~ g_{\nu\nu} - \tilde{T}_{\nu\nu} ~ g_{00})
~\Leftrightarrow~
0 \leq \tilde{T}_{00} ~ g_{\nu\nu} - \tilde{T}_{\nu\nu} ~ g_{00},
\eas
\textit{i.e.} using the Colombeau algebra on the classical situation gives an equivalent description. Finally one can observe in the proof of the previous theorem that one gets back the classical three inequalities especially by the definition of $\hat{\mathcal{L}}^1_0(M)$; this and the previous discussion might be seen as a confirmation for taking such lightlike vector fields although of the problem discussed in Rem. \ref{rem:lightlike}.

\section{Conjecture}

Now we have everything to start the study about the NEC. So let us go back to the spacetime $M$ defined as in Def. \ref{def:thinshellwormhole}. Then we calculate the stress-energy tensor as defined in Def. \ref{def:F(R)} by using the Colombeau algebra. We will use quadratic $F(x) = 2 \Lambda + x + \alpha_2 x^2$ such that we get in the Colombeau algebra (recall the notation of $\cdot[\iota(g)]$ as above)
\ba
\kappa T^{\mathrm{NEC}}
=&~
(1 + 2 \alpha_2 ~ R[\iota(g)]) ~ \mathrm{Ric}[\iota(g)]
- 2 \alpha_2 \cdot \nabla^2(R[\iota(g)]),
\label{eq:stressenergyrigorous}
\ea
where $T^{\mathrm{NEC}}$ means the part of the stress-energy tensor needed for the NEC, \textit{i.e.} all parts proportional to $\iota(g)$ are put to zero by the definition of $\hat{\mathcal{L}}^1_0(M)$, see Def. \ref{def:lightlike}, since we will look at $T(X,X)$ for $X \in \hat{\mathcal{L}}^1_0(M)$. To calculate this we have to use the Colombeau algebra now and we will use for simplicity the Schwarzschild spacetime on both sides of the wormhole (with the same mass $\tilde{M}$), \textit{i.e.} $A^\pm(r_\pm) := 1 - \frac{2 \tilde{M}}{r_\pm}$ and $B^\pm := \left( A^\pm \right)^{-1}$, $a > 2 \tilde{M}$ (recall the beginning of section 2; $a$ describes the radius of the wormhole throat which shall be big enough to prevent event horizons which would lie at $2\tilde{M}$). As one can see in Def. \ref{def:thinshellwormhole} the kink of the metric is only along the proper radial distance $\tilde{\eta}$ and the whole universe itself is also spherical symmetric. Thus, we only need to approximate along the proper radial distance, so one approximates the function only along $\tilde{\eta}$ like it is also done in \cite{Schwarzschild} where the Schwarzschild metric is embedded into the Colombeau algebra. For this take an admissible mollifier (see Lemma \ref{lem:admismoll}) $\rho_\varepsilon \in C^\infty_c(\mathds{R})$ in one dimension. Since $\eta$ is given by $\tilde{\eta}$ in coordinates one gets \textit{e.g.} $\iota(\Theta \circ \eta) = (\Theta * \rho_\varepsilon) \circ \eta$. As discussed previously the Colombeau algebra is generalizing the Lie derivative which means that the Lie deriviative commutes with $\iota$. One can then easily check by calculation that (for $X:= \mathrm{grad}(\eta)/||\mathrm{grad}(\eta)||^2 \stackrel{\text{here}}{=} \partial_{\tilde{\eta}}$; recall also Def. \ref{def:compwitheta}) the following holds
\bas
\iota(\delta \circ \eta)
&=
\iota\left(\mathcal{L}_X(\Theta \circ \eta)\right)
= \mathcal{L}_X(\iota(\Theta \circ \eta))
= \mathcal{L}_X((\Theta * \rho_\varepsilon) \circ \eta)
= \rho_\varepsilon \circ \eta
\eas
and by this and by construction of the Colombeau algebra it is also clear that for all $w \in \Omega_c^n(M)$
\ba
(\delta \circ \eta)(w)
&=
\lim_{\varepsilon \to 0} \int_M \iota(\delta \circ \eta) ~ w
=
\lim_{\varepsilon \to 0} \int_M (\rho_\varepsilon \circ \eta) ~ w
\ea
holds. This shows that Def. \ref{def:compwitheta} is equivalent to the standard way to define $\delta \circ \eta$ at least for our situation, but this argument could be easily extended to the general case. All the other arising non-smooth functions can be embedded in a similar manner.

Now one calculates the stress-energy tensor; when one is doing this one will realize some big disadvantage of the Colombeau algebra: It is much harder to calculate and it is not clear if one can study the NEC then without use of numerics. Thus, we want to simplify the following calculations by analysing the additional information of the Colombeau algebra compared to the distributional framework and using suitable approximations. The wormhole has a kink in the metric in the distributional framework but when one embeds the wormhole into the Colombeau algebra one gets a sequence of smooth metrics approximating the initial metric. As in the classical case of standard Einstein field equations one would like that the stress-energy tensor will have a support in the thin shell (for $\varepsilon \to 0$) since one has Schwarzschild spacetimes (a vacuum solution of the standard Einstein field equations) off the thin shell. By inserting the curvature terms into Eq. \eqref{eq:F(S)} without using the Colombeau algebra one would naively argue that the support has to remain on the thin shell also in the $F(R)$-gravity because the support can not increase after multiplications and derivatives. This stress-energy is given by a sequence of stress-energy tensors given by the previously mentioned sequence of smooth metrics. Since this sequence of smooth metrics approximates the initial metric with a kink (where the matter lies) one can naively expect that for small and fixed $\varepsilon$ the matter lies on a spherical shell around the thin shell, \textit{i.e.} the support of the stress-energy tensor for small but fixed $\varepsilon$ is basically a spherical shell around the thin shell. This means that one has additional information about matter along the proper radial distance, the so-called \textit{microstructure}.

In the classical case using the standard Einstein field equations this additional information vanishes for $\varepsilon \to 0$ (also seen by $f(x) ~ \delta(x) = f(0) ~ \delta(x)$) which is the reason why one calls this microstructure since this is basically an infinitesimal information about matter; and so only the exotic matter survives in the classical situation, \textit{exotic} because this remaining matter violates the NEC. But now we have a highly non-linear situation with multiplications with arbitrary high singularities and thus it could happen that the microstructure (multiplied with such a singularity) does not vanish anymore as $\varepsilon \to 0$; see \textit{e.g.} \cite{VickerAnfang}, Section 6 and 7, or \cite{Multiplikation} for another physical examples about an interpretation of this arising microstructure in non-linear theories. In the conclusion is also an extended discussion about microstructure referring to another examples. This microstructure is basically the additional new physical information one gets using the Colombeau algebra. We will now try to make the microstructure as small as possible to make the "step of generalization" (the difference of this framework to the distributional framework) as small as possible since allowing any big step of generalization would always lead to a theory in which a desired statement holds. Then it is also easier to compare the results with the classical situation.

To do this we will do any calculation in the distributional framework which can be done in the distributional framework. Hence, we will calculate every curvature term in the distributional framework and then we embed these terms independently into the Colombeau algebra instead of embedding the metric and calculating the curvature terms in the Colombeau algebra. When one is doing this one gets then for the stress-energy tensor instead the following formula (compare it with Eq. \eqref{eq:stressenergyrigorous})
\ba
\kappa T^{\mathrm{NEC}}
=&~
(1 + 2 \alpha_2 ~ \iota(R)) ~ \iota(\mathrm{Ric})
- 2 \alpha_2 \cdot \nabla^2(\iota(R)).
\ea
After a lot of straightforward calculations one gets the following results $\big($recall Prop. \ref{prop:deltcompetaexp} and Thm. \ref{thm:NEC} and its remark; we also assume that Eq. \eqref{eq:wichtig} holds and we use Eq. \eqref{eq:wichtig} on the terms which do not contain $\alpha_2$ to get back the classical results. Thus, one gets an $\approx$ and that is allowed for the study of the NEC by Cor. \ref{cor:NECgleich}. The prime denotes the derivative along $\tilde{\eta}$, \textit{i.e.} (for $k \in \mathds{N}$) $\delta^{(k)} \circ \eta := \mathcal{L}^k_{\partial_{\tilde{\eta}}}(\delta \circ \eta)$ and it easily follows that $\iota(\delta^{(k)} \circ \eta) = \rho_\varepsilon^{(k)} \circ \eta$$\big)$
\ba
\kappa \sigma
:=&~ \kappa T^{\mathrm{NEC}}_{t_+t_+} - \kappa T^{\mathrm{NEC}}_{\tilde{\eta}\tilde{\eta}} \iota(g_{t_+t_+}) \nonumber \\
\approx&~
- \frac{4}{a} ~ \left(1-\frac{2 \tilde{M}}{a}\right)^{3/2} ~ \iota(\delta \circ \eta)
- 2 \alpha_2 ~ \alpha ~ \left( \left(\mathfrak{\Gamma}^-\right)^{\tilde{\eta}}_{~t_+t_+} + \left(\mathfrak{\Gamma}^+\right)^{\tilde{\eta}}_{~t_+t_+} \right) ~ \iota(\delta' \circ \eta) \nonumber \\
&
- 2\iota(g_{t_+t_+}) ~ \alpha_2 ~ \alpha ~ \iota(\delta'' \circ \eta)
- \left( \iota(g_{t_+t_+}) ~ \alpha^2 + 2 \alpha ~ \beta \right) ~ \alpha_2 ~ (\iota(\delta \circ \eta))^2, \label{ineq1} \\
\kappa (\sigma - \nu_\vartheta)
:=& ~\kappa T^{\mathrm{NEC}}_{t_+t_+} \iota(g_{\vartheta\vartheta}) - \kappa T^{\mathrm{NEC}}_{\vartheta\vartheta} \iota(g_{t_+t_+}) \nonumber \\
\approx&~
\left( 6 \tilde{M} -2 a \right) \sqrt{1 - \frac{2 \tilde{M}}{a}} ~ \iota(\delta \circ \eta)
- 2 \alpha_2 \alpha \left( \beta ~ \iota(g_{\vartheta\vartheta}) + \gamma ~ \iota(g_{t_+t_+}) \right) (\iota(\delta \circ \eta))^2 \nonumber \\
&
+ 2 \alpha_2 ~ \alpha
\left(
\iota(g_{t_+t_+}) \left( \left(\mathfrak{\Gamma}^-\right)^{\tilde{\eta}}_{~\vartheta\vartheta} + \left(\mathfrak{\Gamma}^+\right)^{\tilde{\eta}}_{~\vartheta\vartheta} \right)
- \iota(g_{\vartheta\vartheta}) \left( \left(\mathfrak{\Gamma}^-\right)^{\tilde{\eta}}_{~t_+t_+} + \left(\mathfrak{\Gamma}^+\right)^{\tilde{\eta}}_{~t_+t_+} \right)
\right) \nonumber \\
&\quad~\cdot
\iota(\delta' \circ \eta),
\label{ineq2}
\ea
where
\ba
\alpha
&:=
\sqrt{\frac{a}{a-2\tilde{M}}} ~ \frac{4 \tilde{M}}{a^2}
+ \frac{8}{a} ~ \sqrt{1-\frac{2 \tilde{M}}{a}},
&\beta
&:= \sqrt{1-\frac{2 \tilde{M}}{a}} ~ \frac{2 \tilde{M}}{a^2},
&\gamma
&:= 2 a \sqrt{1 - \frac{2 \tilde{M}}{a}}, \nonumber \\
\mathfrak{\Gamma}^\pm &:= \iota(\Gamma^\pm ~ (\Theta \circ (\pm\eta)))
\ea
and we write for the Christoffel symbols (omitting indices) $\Gamma = \Gamma^+ ~ (\Theta \circ \eta) + \Gamma^- ~ (\Theta \circ (-\eta))$, so $\Gamma^\pm$ denote the Christoffel symbols on the positive and negative level sets of $\eta$, respectively. More explicitly the embedded Christoffel symbols and metric components look like (recall Eq. \eqref{eq:rinverse})
\ba
\left(\mathfrak{\Gamma}^\pm\right)^{\tilde{\eta}}_{~t_+t_+}
&= \pm \int_a^\infty \frac{\tilde{M}}{r^2} ~ \rho_\varepsilon(\tilde{\eta} - \tilde{\eta}_\pm(r)) ~ \mathrm{d}r, \quad
\left(\mathfrak{\Gamma}^\pm\right)^{\tilde{\eta}}_{~\vartheta\vartheta}
= \mp \int_a^\infty r ~ \rho_\varepsilon(\tilde{\eta} - \tilde{\eta}_\pm(r)) ~ \mathrm{d}r, \\
\left(\Gamma^\pm\right)^{\tilde{\eta}}_{~t_+t_+}(\tilde{\eta})
&= \pm \sqrt{1- \frac{2\tilde{M}}{r_\pm(\tilde{\eta})}} ~ \frac{\tilde{M}}{r_\pm(\tilde{\eta})^2},
\quad
\left(\Gamma^\pm\right)^{\tilde{\eta}}_{~\vartheta\vartheta}(\tilde{\eta})
= \mp r_\pm(\tilde{\eta}) ~ \sqrt{1 - \frac{2 \tilde{M}}{r_\pm(\tilde{\eta})}}, \\
\iota(g_{t_+t_+})
&=
- \int_a^\infty \sqrt{1-\frac{2\tilde{M}}{r}} ~ \left(\rho_\varepsilon(\tilde{\eta}-\tilde{\eta}_+(r)) + \rho_\varepsilon(\tilde{\eta}-\tilde{\eta}_-(r)) \right) ~ \mathrm{d}r, \\
\iota(g_{\vartheta\vartheta})
&=
\int_a^\infty r^2 ~ \sqrt{\frac{r}{r - 2\tilde{M}}} ~ \left( \rho_\varepsilon(\tilde{\eta} - \tilde{\eta}_+(r)) + \rho_\varepsilon(\tilde{\eta}-\tilde{\eta}_-(r)) \right) ~ \mathrm{d}r.
\ea
When one puts $\alpha_2 = 0$ then one gets exactly the classical results\footnote{Up to factors of the components of the metric since we did not raise any index in the stress-energy tensor like in \cite{Visser}.} as presented in \cite{Visser}, Chapter 15, which also describe the surface energy density $\sigma$ and the difference between the surface energy density and the tension $\nu_\vartheta$ along $\vartheta$ (everything multiplied with $\kappa$). Therefore we write for these expressions also $\sigma$ and $\sigma - \nu_\vartheta$. Observe that $\sigma$ is a negative energy density in the classical setting ($\alpha_2 = 0$) which is the reason why one speaks about that wormholes consists of exotic matter, \textit{exotic} because such matter was of course never observed yet. One also has $\kappa T^{\mathrm{NEC}}_{t_+t_+} \iota(g_{\varphi\varphi}) - \kappa T^{\mathrm{NEC}}_{\varphi\varphi} \iota(g_{t_+t_+}) = \sin^2(\vartheta) \left( \kappa T^{\mathrm{NEC}}_{t_+t_+} \iota(g_{\vartheta\vartheta}) - \kappa T^{\mathrm{NEC}}_{\vartheta\vartheta} \iota(g_{t_+t_+})\right)$ such that the left hand side is non-negative if and only if the right hand side is non-negative. Thence, one only has to check two inequalities instead of all three inequalities given in Thm. \ref{thm:NEC}.

We will now finally study the NEC but we will not do this explicitly since the terms are rather complicated (as one can see) such that a rigorous study would exceed this paper. Therefore we will state a conjecture about the NEC and we will motivate this by minimising again the microstructure.

\begin{conjecture}[Satisfied NEC]
\leavevmode\newline
When $\alpha_2 > 0$ then there is a constant $b > 2 \tilde{M}$ such that the NEC holds for all $a \in \left(2\tilde{M},b\right]$.
\end{conjecture}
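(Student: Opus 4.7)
The plan is to verify the two inequalities $\kappa\sigma \geq 0$ and $\kappa(\sigma-\nu_\vartheta) \geq 0$ from Thm.~\ref{thm:NEC} in the Colombeau sense of Def.~\ref{def:nonnegCol}. The first step is to work out the asymptotic behaviour of each coefficient in \eqref{ineq1}--\eqref{ineq2} as $a \to 2\tilde{M}^+$. Using $\alpha\sim\sqrt{2/(\tilde{M}(a-2\tilde{M}))}$, $\beta\sim\sqrt{a-2\tilde{M}}/(2\tilde{M})^{3/2}$, $\gamma\sim 2\sqrt{2\tilde{M}(a-2\tilde{M})}$ together with the uniform convergence of $\iota(g_{t_+t_+})$ to the continuous $g_{t_+t_+}$ with $g_{t_+t_+}(0)=-(a-2\tilde{M})/a$, the leading singular parts of $\iota(g_{t_+t_+})\alpha^2$ and $2\alpha\beta$ cancel in the coefficient of $(\iota(\delta\circ\eta))^2$ in \eqref{ineq1}, leaving a strictly positive residual of order $2\alpha_2(a-2\tilde{M})/\tilde{M}^3$. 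A parallel computation for \eqref{ineq2} also yields a strictly positive quadratic coefficient. In the same regime the classical coefficient of $\iota(\delta\circ\eta)$ in \eqref{ineq1} is negative but vanishes like $(a-2\tilde{M})^{3/2}$, the one in \eqref{ineq2} is positive for $a\in(2\tilde{M},3\tilde{M})$ and vanishes like $(a-2\tilde{M})^{1/2}$, and the coefficients of $\iota(\delta^{(k)}\circ\eta)$ for $k=1,2$ vanish like $(a-2\tilde{M})^{1/2}$.

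Second, one translates this coefficient hierarchy into estimates on $\int_M(\kappa\sigma)_-\,w$ and $\int_M(\kappa(\sigma-\nu_\vartheta))_-\,w$ against a compactly supported $w=f\,\mathrm{dvol}$. Rescaling $\tilde{\eta}=\varepsilon s$ and Taylor-expanding $f$ around the shell, the quadratic contribution is pointwise non-negative and integrates to a positive quantity of size $(a-2\tilde{M})/\varepsilon$; the classical contribution is finite of size $(a-2\tilde{M})^{3/2}$; and, after integration by parts transferring the derivative onto $f$ and the smooth coefficients $\iota(g_{ij})$, $\mathfrak{\Gamma}^\pm$, the derivative-of-delta contributions are finite of size $(a-2\tilde{M})^{1/2}$. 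For $a$ close to $2\tilde{M}$ and $\varepsilon$ small this yields divergent integrated positivity of both $\int_M\kappa\sigma\,w$ and $\int_M\kappa(\sigma-\nu_\vartheta)\,w$ whenever $w$ is non-negative with $f(0)>0$, dominated by the quadratic piece.

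The main obstruction is that this integrated positivity is strictly weaker than the vanishing of $\int(\cdot)_-\,w$ demanded by Def.~\ref{def:nonnegCol}: pointwise the term $\iota(\delta''\circ\eta)$ is of size $\varepsilon^{-3}$, exceeding the size $\varepsilon^{-2}$ of the quadratic piece, so $\kappa\sigma$ is not pointwise non-negative and a crude bound on $(\kappa\sigma)_-$ yields an integrated negative part of order $(a-2\tilde{M})^{1/2}/\varepsilon^{2}$, which diverges as $\varepsilon \to 0^+$. A rigorous proof must therefore exploit cancellation between the positive and negative lobes of $\rho_\varepsilon^{(k)}$ \emph{inside} $(\kappa\sigma)_-$, not merely within $\kappa\sigma$. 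I would try to establish, for a restricted class of symmetric mollifiers, a pointwise completion-of-squares estimate on the rescaled profile $P(s)=A\psi(s)+\varepsilon^{-1}B\psi'(s)+\varepsilon^{-2}C\psi''(s)+\varepsilon^{-1}D\psi(s)^{2}$ that bounds $P_-$ by a quantity whose integral tends to zero as $\varepsilon\to 0^+$. Proving such a bound uniformly in the mollifier is the technically delicate step; it is quite conceivable that the conjecture holds only on the weaker level of association, or only for a restricted class of admissible mollifiers, or only after a modest strengthening of the second clause of Def.~\ref{def:lightlike} that excludes the highest-order singular components.
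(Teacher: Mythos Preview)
Your proposal and the paper agree on the essential point: neither is a proof. The statement is a \emph{conjecture}, and the paper offers only a heuristic \textsc{motivation}, explicitly flagging that the argument is not rigorous. Both you and the paper correctly isolate the same obstruction---the $\iota(\delta''\circ\eta)$ term in $\kappa\sigma$, whose negative part diverges faster than the positive $(\iota(\delta\circ\eta))^2$ contribution can compensate pointwise---and both conclude that a genuine proof would require understanding how the squared delta controls the negative lobes of $\delta''$ inside $(\cdot)_-$.

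The routes to that obstruction differ. You perform a direct asymptotic analysis of the coefficients in \eqref{ineq1}--\eqref{ineq2} as $a\to 2\tilde{M}^+$ and then rescale $\tilde{\eta}=\varepsilon s$. The paper instead applies an \emph{ansatz}: it replaces $\iota(f)\,\iota(\delta^{(k)}\circ\eta)$ by a Heaviside-weighted expansion and then uses the association identities $\iota(\Theta\circ(\pm\eta))\,\iota(\delta'\circ\eta)\approx\tfrac{1}{2}\iota(\delta'\circ\eta)\mp(\iota(\delta\circ\eta))^2$ (and the analogue for $\delta''$) to convert the Christoffel-times-$\delta'$ terms into explicit $\delta^2$ and $\delta$ contributions. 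This buys the paper a closed-form simplification at the price of an uncontrolled approximation; your rescaling is cleaner in principle but leaves the Christoffel-$\delta'$ cross terms untouched.

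That difference matters for one concrete error in your proposal. You assert that ``a parallel computation for \eqref{ineq2} also yields a strictly positive quadratic coefficient,'' but the literal coefficient of $(\iota(\delta\circ\eta))^2$ in \eqref{ineq2} is $-2\alpha_2\alpha\bigl(\beta\,\iota(g_{\vartheta\vartheta})+\gamma\,\iota(g_{t_+t_+})\bigr)$, which at the shell equals $-4\alpha_2\alpha\sqrt{1-2\tilde{M}/a}\,(3\tilde{M}-a)$ and is \emph{negative} for $a\in(2\tilde{M},3\tilde{M})$. The paper's ansatz reveals why this is not fatal: the Heaviside structure hidden in $\mathfrak{\Gamma}^\pm\,\iota(\delta'\circ\eta)$ generates, via the identity above, additional $(\iota(\delta\circ\eta))^2$ terms that exactly cancel this negative piece, leaving $\kappa(\sigma-\nu_\vartheta)$ as a pure $\iota(\delta\circ\eta)$ term that is non-negative for $a\le\tfrac{\tilde{M}}{2}(1+\sqrt{13})$. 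Your direct coefficient analysis misses this mechanism, so for \eqref{ineq2} the paper's association-based bookkeeping is genuinely more informative. The paper also records a useful no-go fact you do not mention: in $\mathcal{D}'(\mathds{R})$, $c\delta+b\delta''$ with $c\ge 0$ is non-negative only if $b=0$, which sharpens why the $\delta^2$ term is indispensable for $\kappa\sigma$.
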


\begin{motivation}
\leavevmode\newline
To motivate the conjecture let us simplify the terms like one may do this in physics. As we already discussed one can see that the supports of all the terms for $\varepsilon \to 0^+$ is the thin shell, \textit{i.e.} integrating the arising terms against a bump function with support off the thin shell one gets 0 for $\varepsilon \to 0^+$. Thus, one would think that one basically only needs the values on the thin shell. Hence the idea is to look at both sides/universes off the thin shell (where then everything is smooth and classical) and then to approach the thin shell by replacing the smooth functions in front of the delta distributions somehow by their constant values on the thin shell, \textit{e.g.} replacing the embedded metric by its value on the thin shell viewed of both sides and weighting the results of both sides off the thin shell with the corresponding Heaviside distribution (the same for the Christoffel symbols and for the products of both). By this procedure one avoids the derivatives of the Heaviside distribution since one does initially not directly look at the thin shell and one only has to smooth the Heaviside distribution. This is of course not rigorous, it is more like reducing again the microstructure as an ansatz to investigate the problem of non-negativity since one reduces the sequence structure to the Heaviside distributions. But beware that the derivatives of the delta distribution are of higher order, \textit{i.e.} they need also the information about the derivatives of the bump functions which can be seen by the following distributional properties for smooth functions $f$ (which can be derived very easily in the distributional framework and clearly extends to the Colombeau algebra on the level of association)
\bas
f \delta'
&=
-f'(0) ~ \delta + f(0) ~ \delta', \\
f \delta''
&=
f''(0) ~ \delta - 2 f'(0) ~ \delta' + f(0) ~ \delta''.
\eas
So one sees that for derivatives of the delta distribution it is not allowed in general to replace the smooth function in front by its value at zero. Thence one could ask if something similar happens at the square of the delta  distribution. This can not be answered easily and for simplicity we replace the function only with its value on the thin shell but this have to be discussed of course (therefore again: this is only a motivation of a conjecture and not a proof). One could try to prove this by showing that $f\delta = f(0) ~ \delta$ is stable in sense of association under a multiplication with an additional $\delta$.
\newline So one has overall the following ansatz of replacing the terms: For $f$ smooth on both sides off the thin shell (but might be discontinuous on the thin shell) we write $f = f^+ (\Theta \circ \eta) + f^- (\Theta \circ (-\eta))$ and apply the ansatz
\ba
\iota(f) ~ \iota(\delta\circ \eta) \leadsto&~
f^+(0) ~ \iota(\Theta \circ \eta) ~ \iota(\delta \circ \eta) + f^-(0) ~ \iota(\Theta \circ (-\eta)) ~ \iota(\delta\circ \eta), \\
\iota(f) ~ \iota(\delta'\circ\eta) \leadsto&~
f^+(0) ~ \iota(\Theta \circ \eta) ~ \iota(\delta' \circ \eta) + f^-(0) ~ \iota(\Theta \circ (-\eta)) ~ \iota(\delta' \circ \eta) \nonumber \\
&-\left(f^+\right)'(0) ~ \iota(\Theta \circ \eta) ~ \iota(\delta \circ \eta) -\left(f^-\right)'(0) ~ \iota(\Theta \circ (-\eta)) ~ \iota(\delta \circ \eta), \\
\iota(f) ~ \iota(\delta'' \circ \eta) \leadsto&~
f^+(0) ~ \iota(\Theta \circ \eta) ~ \iota(\delta''\circ\eta) + f^-(0) ~ \iota(\Theta \circ (-\eta)) ~ \iota(\delta''\circ\eta) \nonumber \\
&-2\left(f^+\right)'(0) ~ \iota(\Theta \circ \eta) ~ \iota(\delta'\circ\eta) -2\left(f^-\right)'(0) ~ \iota(\Theta \circ (-\eta)) ~ \iota(\delta'\circ\eta) \nonumber \\
&+\left(f^+\right)''(0) ~\iota(\Theta \circ \eta) ~ \iota(\delta \circ \eta)
+ \left(f^-\right)''(0) ~\iota(\Theta \circ (-\eta)) ~ \iota(\delta \circ \eta)
\ea
and if $f$ is continuous at the thin shell
\ba
\iota(f) ~ \iota(\delta \circ \eta)^2 \leadsto&~
f(0) ~ \iota(\delta\circ\eta)^2.
\ea
Using this idea the following term is replaced by (recall also Eq. \eqref{eq:jump} and also that the Christoffel symbols are discontinuous at the thin shell)
\bas
&\left( \left(\mathfrak{\Gamma}^+\right)^{\tilde{\eta}}_{~t_+t_+} + \left(\mathfrak{\Gamma}^-\right)^{\tilde{\eta}}_{~t_+t_+} \right) ~ \iota(\delta' \circ \eta) \\
&\leadsto
\Biggl(
\underbrace{\sqrt{1 - \frac{2 \tilde{M}}{a}} ~ \frac{\tilde{M}}{a^2}}_{\leadsto "f^+(0)"} ~ \underbrace{\iota(\Theta \circ \eta)}_{\text{since this is the part of } \mathfrak{\Gamma}^+}
- \sqrt{1 - \frac{2 \tilde{M}}{a}} ~ \frac{\tilde{M}}{a^2} ~ \iota(\Theta \circ (-\eta))
\Biggr)
~ \iota(\delta' \circ \eta) \\
&\quad~
- \Biggl(
\underbrace{\sqrt{1-\frac{2 \tilde{M}}{a}}}_{= \frac{\mathrm{d}r_+}{\mathrm{d}\tilde{\eta}}(0)} \underbrace{\left(\frac{1}{\sqrt{1-\frac{2 \tilde{M}}{a}}} \frac{\tilde{M}^2}{a^4} - 2 ~ \sqrt{1-\frac{2 \tilde{M}}{a}} ~ \frac{\tilde{M}}{a^3} \right)}_{\leadsto "\frac{\mathrm{d}f^+}{\mathrm{d}r_+}(0)"}
~ \iota(\Theta \circ \eta) \\
&\qquad\quad
+ \underbrace{\sqrt{1-\frac{2 \tilde{M}}{a}}}_{= - \frac{\mathrm{d}r_-}{\mathrm{d}\tilde{\eta}}(0)} \underbrace{\left(\frac{1}{\sqrt{1-\frac{2 \tilde{M}}{a}}} \frac{\tilde{M}^2}{a^4} - 2 ~ \sqrt{1-\frac{2 \tilde{M}}{a}} ~ \frac{\tilde{M}}{a^3} \right)}_{\leadsto "-\frac{\mathrm{d}f^-}{\mathrm{d}r_-}(0)"}
~ \iota(\Theta \circ (-\eta))
\Biggr)
~ \iota(\delta \circ \eta)
\eas
To simplify this we now calculate the multiplications of the Heaviside distribution with the derivatives of the delta distribution which can be calculated on the level of association. As mentioned in Rem. \ref{rem:association} we know
\bas
\iota(\Theta \circ (\pm \eta)) ~ \iota(\delta \circ \eta)
&\approx
\frac{1}{2} ~ \iota(\delta \circ \eta),
\eas
differentiating both equations along the vector field $X := \frac{\mathrm{grad}(\eta)}{||\mathrm{grad}(\eta)||^2} = \partial_{\tilde{\eta}}$ one gets (using Def. \ref{def:compwitheta} and Prop. \ref{prop:deltcompetaexp})
\bas
&&
\pm (\iota(\delta \circ \eta))^2
+ \iota(\Theta \circ (\pm \eta)) ~ \iota(\delta' \circ \eta)
&\approx
\frac{1}{2} ~ \iota(\delta' \circ \eta) \\
&\Leftrightarrow&
\iota(\Theta \circ (\pm \eta)) ~ \iota(\delta' \circ \eta)
&\approx
\frac{1}{2} ~ \iota(\delta' \circ \eta)
\mp (\iota(\delta \circ \eta))^2.
\eas
Differentiating this again along $X$ one gets similarly
\bas
&&
\pm \iota(\delta \circ \eta) ~ \iota(\delta' \circ \eta)
+ \iota(\Theta \circ (\pm \eta)) ~ \iota(\delta'' \circ \eta)
&\approx
\frac{1}{2} ~ \iota(\delta'' \circ \eta)
\mp 2 ~ \iota(\delta \circ \eta) ~ \iota(\delta' \circ \eta) \\
&\Leftrightarrow&
\iota(\Theta \circ (\pm \eta)) ~ \iota(\delta'' \circ \eta)
&\approx
\frac{1}{2} ~ \iota(\delta'' \circ \eta)
\mp 3 ~ \iota(\delta \circ \eta) ~ \iota(\delta' \circ \eta),
\eas
so one gets
\bas
\iota(\Theta \circ (\eta)) ~ \iota(\delta'' \circ \eta)
+ \iota(\Theta \circ (- \eta)) ~ \iota(\delta'' \circ \eta)
&\approx
\iota(\delta'' \circ \eta), \\
\iota(\Theta \circ (\eta)) ~ \iota(\delta' \circ \eta)
+ \iota(\Theta \circ (- \eta)) ~ \iota(\delta' \circ \eta)
&\approx
\iota(\delta' \circ \eta), \\
\iota(\Theta \circ (\eta)) ~ \iota(\delta' \circ \eta)
- \iota(\Theta \circ (- \eta)) ~ \iota(\delta' \circ \eta)
&\approx
- 2 (\iota(\delta \circ \eta))^2.
\eas
These results we will use in the following. Also observe that the third equality shows that the first derivative of the delta distribution can lead to a term of constant sign which one would normally not expect for the first derivative of the delta distribution. We will be able to use these results basically because of the equality of both masses on both sides of the thin shell, such that one might think that these results are physically unstable. One also sees by the first two equations that our approach makes sense with respect to the derivatives of the delta distribution (one also has clearly something similar for the delta distribution itself), so it not automatically fails, because in the smooth case (so really everything is smooth) the Heaviside distributions should not influence anything and on the level of association this would hold then (using the first two equations) such that one gets the expected equations for the multiplication $f \delta$ \textit{etc.}.
\newline Inserting these in the term of the Christoffel symbols above one gets up to association
\bas
&\left( \left(\mathfrak{\Gamma}^+\right)^{\tilde{\eta}}_{~t_+t_+} + \left(\mathfrak{\Gamma}^-\right)^{\tilde{\eta}}_{~t_+t_+} \right) ~ \iota(\delta' \circ \eta) \\
&\leadsto
\dots \approx
- 2 \sqrt{1- \frac{2 \tilde{M}}{a}} ~ \frac{\tilde{M}}{a^2} ~ (\iota(\delta \circ \eta))^2
+ \left( 2 \left( 1 - \frac{2 \tilde{M}}{a} \right) ~ \frac{\tilde{M}}{a^3} - \frac{\tilde{M}^2}{a^4} \right) ~ \iota(\delta \circ \eta).
\eas
Similarly one calculates the other terms
\bas
&-\iota^0_2(g_{t_+t_+}) ~ \iota(\delta'' \circ \eta)
\leadsto
\dots \\
&\approx
\left( 1 - \frac{2 \tilde{M}}{a} \right) \iota(\delta'' \circ \eta)
+ \frac{8 \tilde{M}}{a^2} \sqrt{1-\frac{2 \tilde{M}}{a}} ~ (\iota(\delta \circ \eta))^2
+ \left( \frac{2 \tilde{M}^2}{a^4} - \frac{4 \tilde{M}}{a^3} \left(1-\frac{2 \tilde{M}}{a}\right) \right) \iota(\delta \circ \eta), \\
&- \iota^0_2(g_{t_+t_+}) \left( \left(\mathfrak{\Gamma}^-\right)^{\tilde{\eta}}_{~\vartheta\vartheta} + \left(\mathfrak{\Gamma}^+\right)^{\tilde{\eta}}_{~\vartheta\vartheta} \right) ~ \iota(\delta' \circ \eta) \\
&\leadsto
\dots 
\approx
2 a \left( 1 - \frac{2 \tilde{M}}{a} \right)^{3/2} ~ (\iota(\delta \circ \eta))^2
+ \left(1 - \frac{2 \tilde{M}}{a}\right) \left( 1 + \frac{\tilde{M}}{a} \right) ~ \iota(\delta \circ \eta),\\
&\iota^0_2(g_{\vartheta\vartheta}) \left( \left(\mathfrak{\Gamma}^+\right)^{\tilde{\eta}}_{~t_+t_+} + \left(\mathfrak{\Gamma}^-\right)^{\tilde{\eta}}_{~t_+t_+} \right) ~ \iota(\delta' \circ \eta) \\
&\leadsto
\dots
\approx
- 2 \tilde{M} \sqrt{1 - \frac{2 \tilde{M}}{a}} ~ (\iota(\delta \circ \eta))^2
- \frac{\tilde{M}^2}{a^2} ~ \iota(\delta \circ \eta).
\eas
Inserting these results into $\kappa \sigma$ and $\kappa (\sigma - \nu_\vartheta)$ (Eq. \eqref{ineq1} and \eqref{ineq2}) one gets (after some straightforward calculations)
\bas
\kappa \sigma
&\leadsto \dots \approx
\left(
- \frac{4}{a} ~ \left(1-\frac{2 \tilde{M}}{a}\right)^{3/2}
+ 2 \alpha_2 \alpha \left( \frac{3 \tilde{M}^2}{a^4} - \frac{6 \tilde{M}}{a^3} \left( 1 - \frac{2 \tilde{M}}{a} \right) \right)
\right) ~ \iota(\delta \circ \eta) \\
&\quad~
+ 2 \alpha_2 \alpha \left( 1 - \frac{2 \tilde{M}}{a} \right) \iota(\delta'' \circ \eta) \\
&\quad~
+ \left(
\left( 1 - \frac{2 \tilde{M}}{a} \right)  \alpha^2
- 2 \alpha \beta
+ 2 \alpha ~ \frac{10 \tilde{M}}{a^2} \sqrt{1-\frac{2 \tilde{M}}{a}}
\right) \alpha_2 ~ (\iota(\delta \circ \eta))^2, \\
\kappa (\sigma - \nu_\vartheta)
&\leadsto \dots \\
&\approx \left( \left( 6 \tilde{M} -2 a \right) \sqrt{1 - \frac{2 \tilde{M}}{a}}
+ 2 \alpha_2 \alpha \left( \frac{\tilde{M}^2}{a^2} - \left(1 - \frac{2 \tilde{M}}{a}\right) \left( 1 + \frac{\tilde{M}}{a} \right) \right) \right) ~ \iota(\delta \circ \eta) \\
&
+ 2 \alpha_2 \alpha
\left(
\gamma \left( 1 - \frac{2 \tilde{M}}{a} \right)
- \beta a^2
- 2 a \left( 1 - \frac{2 \tilde{M}}{a} \right)^{3/2}
+ 2 \tilde{M} \sqrt{1 - \frac{2 \tilde{M}}{a}}
\right)
~ (\iota(\delta \circ \eta))^2
\eas
Let us now discuss these terms. We have
\bas
&&
0 &>
- \frac{4}{a} ~ \left(1-\frac{2 \tilde{M}}{a}\right)^{3/2}
=
\beta - \frac{\alpha}{2} \left( 1 - \frac{2 \tilde{M}}{a} \right) \\
&\stackrel{\cdot (- 2 \alpha)}{\Leftrightarrow}&
0 &<
\left( 1 - \frac{2 \tilde{M}}{a} \right) \alpha^2 - 2 \alpha \beta
\eas
and therefore it might be natural to put $\alpha_2 > 0$ such that the term proportional to the square of the delta distribution in $\kappa \sigma$ is clearly non-negative ($\alpha_2 = 0$ would clearly fail since then one has the classical case). Thus, let us put $\alpha_2 > 0$. Inserting the definitions of the constants $\alpha, \beta, \gamma$ into $\kappa (\sigma - \nu_\vartheta)$ one easily sees that the term proportional to the square of the delta distribution is zero, thus one only has a direct change of the classical term, \textit{i.e.} (again after some straightforward calculation)
\bas
&\kappa (\sigma - \nu_\vartheta) \leadsto \dots \approx
\left( 6 \tilde{M} - 2 a \right) \sqrt{1 - \frac{2 \tilde{M}}{a}} ~ \iota(\delta \circ \eta) \\
&
+ 2 \underbrace{\alpha_2 \left( \sqrt{\frac{a}{a-2\tilde{M}}} ~ \frac{4 \tilde{M}}{a^2} + \frac{8}{a} ~ \sqrt{1-\frac{2 \tilde{M}}{a}} \right)}_{> 0} \left( \frac{\tilde{M}^2}{a^2} - \left(1 - \frac{2 \tilde{M}}{a}\right) \left( 1 + \frac{\tilde{M}}{a} \right) \right) ~ \iota(\delta \circ \eta),
\eas
the first summand is clearly non-negative when one has $a \in \left(2 \tilde{M}, 3 \tilde{M}\right]$ ($a > 2 \tilde{M}$ to prevent an event horizon as usual; $a = 2 \tilde{M}$ would not disturb the non-negativity of course). This also shows that in the pure classical case ($\alpha_2=0$) this inequality is satisfied for small $a$. The problem in the classical case was especially $\sigma$ at which we look later and which is the reason that the NEC does not hold in the classical case. Let us now look when the second summand of $\kappa (\sigma - \nu_\vartheta)$ is non-negative using $\alpha_2 > 0$, \textit{i.e.} (using again that $a > 2 \tilde{M} > 0$ holds)
\bas
&&
0
&\stackrel{!}{\leq}
\frac{\tilde{M}^2}{a^2}
- \left(1 - \frac{2 \tilde{M}}{a}\right) \left( 1 + \frac{\tilde{M}}{a} \right)
=
\frac{\tilde{M}^2}{a^2}
- 1
- \frac{\tilde{M}}{a}
+ \frac{2 \tilde{M}}{a}
+ \frac{2 \tilde{M}^2}{a^2}
=
\frac{3 \tilde{M}^2}{a^2}
+ \frac{\tilde{M}}{a}
- 1
\eas
\bas
&\Leftrightarrow&
0
&\stackrel{!}{\leq}
3 \tilde{M}^2 + \tilde{M} a - a^2
&&\Leftrightarrow&
a
&\in
\left( 2 \tilde{M}, \frac{\tilde{M}}{2} \left( 1 + \sqrt{13} \right) \right].
\eas
Since $a_1 := \frac{\tilde{M}}{2} \left( 1 + \sqrt{13} \right) > 2 \tilde{M}$ one can clearly find an $a$ such that also the second summand of $\kappa (\sigma - \nu_\vartheta)$ is non-negative. Because also $a_1 < 3 \tilde{M}$ one can say that the following statement holds (if our ansatz is correct)
\bas
a \in \left(2 \tilde{M}, a_1 \right]
\Rightarrow
\kappa (\sigma - \nu_\vartheta) \geq 0,
\eas
which is clearly independent of $\alpha_2$. Observe that the range for $a$ is here smaller than in the classical case where one only needs $a \in (2 \tilde{M}, 3 \tilde{M}]$. One can also see that for $a \longrightarrow \left( 2 \tilde{M} \right)^+$ the second summand diverges to positive infinity, \textit{i.e.} it can be made arbitrarily big. This might be only singularity given by the coordinate singularity at the event horizon, \textit{i.e.} this singularity might be not physical.
\newline Now let us look at $\kappa \sigma$; inserting $\alpha, \beta$ and $\gamma$ one gets after some straightforward calculation
\bas
&\kappa \sigma \\
&\leadsto \dots \approx 
\frac{4}{a} \left( \frac{6 \alpha_2 \tilde{M}}{a^3} \left( \frac{\tilde{M}^2}{a^2} \sqrt{\frac{a}{a-2 \tilde{M}}}
- 4 \left( 1 - \frac{2 \tilde{M}}{a} \right)^{3/2} \right)
- \left(1-\frac{2 \tilde{M}}{a}\right)^{3/2} \right) ~ \iota(\delta \circ \eta) \\
&\quad
+ \frac{8 \alpha_2}{a} \sqrt{1 - \frac{2 \tilde{M}}{a}} \left( 2 - \frac{3 \tilde{M}}{a} \right) ~ \iota(\delta'' \circ \eta) \\
&\quad
+ \frac{16 \alpha_2}{a^2} \left( \frac{\tilde{M}}{a} + 2 \left( 1 - \frac{2 \tilde{M}}{a} \right) \right)
\left( \frac{5 \tilde{M}}{a} + 2 \left( 1 - \frac{2 \tilde{M}}{a} \right) \right) ~ (\iota(\delta \circ \eta))^2.
\eas
One can clearly see that the term proportional to the square of the delta distribution is non-negative for all $a > 2 \tilde{M}$ by $\alpha_2>0$. Also the term proportional to the delta distribution is clearly non-negative for $a \in \left(2 \tilde{M}, a_2\right]$ for some $a_2 > 2 \tilde{M}$ since the positive term $\frac{24 \alpha_2 \tilde{M}^3}{a^6} \sqrt{\frac{a}{a - 2 \tilde{M}}}$ diverges for $a \to \left( 2 \tilde{M} \right)^+$ while the negative terms are vanishing (this divergence might come again only by the coordinate singularity). So when the term proportional to the second derivative of the delta distribution would be not there then one would have clearly a non-negative term. For $a \to \left( 2 \tilde{M} \right)^+$ one sees that this problematic term vanishes while the term of the squared delta distribution goes to $\frac{80 \alpha_2 \tilde{M}^2}{a^4} ~ (\iota(\delta \circ \eta))^2 \geq 0$ and the term proportional to the delta distribution grows arbitrarily big to positive infinity as discussed before. So one could think, that making $a$ small enough might lead to a satisfied inequality and that is the reason that I think that the NEC can be satisfied since the second derivative of the delta distribution can be controlled by lowering $a$ (the other inequality with $\nu_\vartheta$ is clearly already satisfied for small $a$ as discussed). In the classical case the energy density vanishes for $a = 2 \tilde{M}$, so then the NEC is satisfied but in our case now it does not vanish, it is arbitrarily big, so one could argue that it is positive. By some sense of continuity one could mention that for $a - 2 \tilde{M}$ small one may achieve a non-negative term such that the NEC should be satisfied for an $a > 2 \tilde{M}$.
\newline But beware that in \textit{e.g.} $\mathcal{D}'\left(\mathds{R}\right)$ the sum $c \delta + b \delta''$ with $c \geq 0$ and $b \in \mathds{R}$ is non-negative if and only if $b=0$. This statement is clear for $c=0$ since $\delta''(f) = f''(0)$ for all smooth functions $f$ (\textit{i.e.} not a non-negative distribution; recall Def. \ref{def:nonnegdistr}). For $c>0$ it is clear that it is non-negative (actually positive) for $b=0$; a more general explanation for any $b$ and $c$ would be by assuming that $c \delta + b \delta''$ is non-negative, \textit{i.e.} it is a signed measure (see the previous discussion about generalizing the sense of non-negativity, especially Thm. \ref{thm:geilestheorem}). Since the delta distribution is also a measure one could conclude that $b \delta''$ is also a signed measure or equivalently a distribution of zero order by the representation theorem of  Riesz-Markov-Saks-Kakutani which is clearly a contradiction for $b\neq0$ since $\delta''$ is a distribution of second order.
\newline Thus, one could argue that the approach above fails since one may only achieve non-negativity for $a = 2 \tilde{M}$; so making $a- 2\tilde{M}$ small but unequal to zero may fail. Therefore, when the conjecture is correct, the role of the positive term of the square of the delta distribution is very crucial. When $\varepsilon \to 0^+$ the positive term proportional to the square of the delta distribution diverges such that one might use this to show non-negativity. But also $\int (\iota(\delta''\circ \eta))_- ~ w$ diverges for $\varepsilon \to 0^+$ since otherwise, if it would exist, one could split $\int \iota(\delta''\circ \eta) ~ w$ (which exists clearly for $\varepsilon \to 0^+$) into the integrals over the negative and positive part. Then also $\int (\iota(\delta''\circ \eta))_+ ~ w$ would exist for $\varepsilon \to 0^+$ but then by the argumentation of the proof of Thm. \ref{thm:geilestheorem} \textit{etc.} $\delta'' \circ \eta$ would be a signed measure which is a contradiction as discussed before. Therefore one has to argue if the square of the delta distribution can compensate the negative part of $\iota(\delta'' \circ \eta)$; maybe here small $a$ can then help.
\end{motivation}
 
\section{Conclusion}

In this paper we saw that one can avoid junction conditions and higher conditions on
the regularity of the metric by using the Colombeau algebra, a generalized framework of the distributional framework, allowing us to study wormholes in theories of gravity which are normally excluded in other papers due to the junction conditions. By this manner we were able to look at a thin shell wormhole
described by a continuous metric embedded into $F(R)$-gravity for arbitrary polynomials
$F$. We also discussed that one can not even rigorously take the divergence of the arising
stress-energy tensor in the classical distributional framework but also this is also solved
in the Colombeau algebra, moreover, one can expect that the known results carry over in
sense of association although they were not derived mathematical rigorous in the classical setting.
We were also able to find a slightly generalized definition of non-negativity such that
(w.r.t. a fixed suitable volume form) on one hand one has no increased set of nonnegative
distributions but on the other hand one has non-negative elements which are
not embedded distributions, e.g. even powers of delta distributions which one would
like to regard as non-negative. Using this together with a suitable definition of lightlike
vector fields one can then naturally generalize
the NEC. We then saw that we have a generalized setting for stress-energy tensors of
Hawking-Ellis type I by Thm. \ref{thm:NEC}. Using this and the approach of minimizing the so-called microstructure we have seen
for quadratic $F$ that there might be a suitable radius of the wormhole throat such that
the NEC is satisfied for our taken thin shell wormhole, especially due to additional terms
in the function of the energy density given by the new arising physical information of the Colombeau algebra due to the microstructure. In the classical situation one does not have any information about the microstructure and, thus, one speaks about exotic matter violating the NEC, especially the energy density is negative in the classical situation while this may not the case with the microstructure.

But we also mentioned the problem of the arising
of the second derivative of the delta distribution. To solve this problem it might be thus
important to understand the square of the delta distribution in sense of the Colombeau
algebra much better. As a first approach one could try to fix an suitable admissible
mollifier and to calculate the arising terms by using an explicit form of the mollifier.
But one also has to solve the problem of the microstructure. For a rigorous proof one is not allowed to use the approach of the conjecture above. Therefore one has to work with the complete framework of the Colombeau algebra, including the new information given by the microstructure. To work with the microstructure one has to characterize it, especially one may have to work out a more detailed picture about its physical interpretation. This would mean that one has to extend the use of the Colombeau algebra in the fields of physics like mentioned in \cite{VickerAnfang} where it is shortly explained why the typical distributional approach has to fail for matter distributions supported in manifolds with codimension of at least 2. \cite{VickerAnfang} starts with a general description of distributions in the standard theory of general relativity and concludes when one can use the classical framework of distributions by stating Theorem 2.2 and the passages before and after it. Especially Theorem 2.2 states that distributions can only be used for thin shells of matter but not for (cosmic) strings and point particles. Therefore \cite{VickerAnfang} introduces also the Colombeau algebra and in Section 6 another example with microstructure is presented, \textit{the thin string limit of cosmic strings}. This example is about "the thin string limit of the field equations for an infinite length gravitating straight cosmic string described by a complex scalar field coupled to a U(1) gauge field" (\cite{VickerAnfang}). There it is mentioned that in Physics such line sources have to satisfy the condition 
\ba
\Delta \Phi = 8 \pi \mu, 
\label{eq:cosmic}
\ea
where $\Delta \Phi$ is the angular deficit and $\mu$ describes the mass per unit length. It is well known that this type of cosmic strings satisfy this condition if and only if it is critically coupled, \textit{i.e.} its vector and scalar fields have equal masses. Normally it is believed that this type of cosmic strings is not well-defined since Condition \eqref{eq:cosmic} depends upon the matter of the string and, thus, can be too easily violated.

However, in \cite{VickerAnfang} it is discussed that Condition \eqref{eq:cosmic} is only needed when one wants to stay in the distributional framework, similar to the statement of the junction conditions which we avoided here in this paper; similarly one needs the Colombeau algebra when one drops Condition \eqref{eq:cosmic}. The main problem of this type of cosmic string is that its stress-energy tensor consists not only of a classical term proportional to a delta distribution with support on the string, it also contains terms like $d ~ \frac{2xy}{x^2+y^2} ~ \delta^{(2)}(x,y)$ where $x$ and $y$ are the Euclidean coordinates for the radial position to the string (\textit{i.e.} the string is the $z$-axis); $d$ is some constant which vanishes for critically coupled strings. Such terms are not well-defined at the string since the limit is directional dependent, \textit{i.e.} such terms do not describe a distribution but for $d=0$ the stress-energy tensor is a typical distribution. One attempt to define it for $d \neq 0$ is the following (and remember the methods presented here in this paper), still following \cite{VickerAnfang} and the references therein: 
\newline
\newline The matter of such strings should be supported at the string, but, as one saw, the internal structure of such strings seems to be too complicated as if it could be described by a simple delta distribution such that one has to regard the internal structure of the matter.\footnote{Remember: Due to the nature of the delta distribution one looses any information off the support of the delta distribution by $f(x) ~ \delta(x) = f(0) ~ \delta(x)$, \textit{i.e.} structure of the matter along $x$ is lost and trivial while one can keep such information in the Colombeau algebra in form of microstructure.} To describe the matter and its internal structure one has the approach to make some physical parameters like the coupling constant/charge to be dependent on some parameter $\varepsilon$ such that the original value for the thin string limit is given by taking $\varepsilon$ to zero. Like for the thin shell wormhole one may think about this ansatz as approximating a string with a cylinder which allows a more complicated internal structure of the matter along the radial direction. These sequences will be elements of the Colombeau algebra and one will have additional sequential information like for our thin shell wormhole; and again, due to the singular terms in the stress-energy tensor some sequential information of the cylindrical internal matter distribution survives\footnote{The limit does not even exist, otherwise a distributional description would be possible.} (which normally vanishes in the distributional limit) because of multiplications with such singular terms. Only for $d = 0$ this microstructure still vanishes and will not play any role for $\varepsilon \to 0$. Hence, also here one has some form of microstructure which may be interpreted physically, in \cite{VickerAnfang} this is also called \textit{distributional microstructure}.

Thence, such conditions like the junction conditions and Condition \eqref{eq:cosmic} may rather be only a technical condition to avoid new mathematical frameworks and technical problems in the mathematics than be something physical. When one looks at their derivation then it is only often about something "not well-defined" and these conditions describe how to get rid off these terms. But I hope that the examples mentioned here may show that one could find new physical descriptions and data by allowing new mathematical frameworks where these critical terms are well-defined. Similar to the introduction of the delta distribution to describe the Maxwell equations for point particles in flat spacetime.

Another useful examples where one can use the Colombeau algebra is \textit{e.g.} given in \cite{Schwarzschild} (a description of the Schwarzschild spacetime in sense of Colombeau which allows to keep the origin of the space in the coordinates without worrying about the singularity), \cite{GenRiem} (at the end, page 21, a discussion about the geodesic equation of impulsive pp-waves) and \cite{Multiplikation} (for applications in classical Electrodynamics). In the beginning of \cite{VickerVorarbeit} is also another concise description about the failure of the distributional framework in special (but not unimportant) situations and the use of the Colombeau algebra in such a situation; especially section 5 (and its references) about ultrarelativistic Black Holes might be interesting because this is normally regarded as an example of the "failure regions" of general relativity. However, there it is discussed that general relativity would not fail anymore in such cases when one allows the Colombeau algebra and microstructure in general relativity. This is done by showing that the stress-energy tensor is more or less the square root of the delta distribution which can be defined in the Colombeau algebra, \textit{i.e.} the stress-energy tensor is non-zero which would resolve this situation. But there this stress-energy tensor and its microstructure would converge to a distribution, the zero function which is the reason why one believes that the ultrarelativistic Black Hole is a failure of general relativity, while it is still non-zero in sense of Colombeau. One could say it is an infinitesimal stress-energy tensor. So the difference to our case is that the microstructure and its stress-energy tensor has in that case a limit for $\varepsilon \to 0$ since there are not too strong singularities (and so one has not such conditions like the junction conditions in Physics for these situations). Concluding this paper, one might hence have to think about different types of microstructure.


\newpage



\renewcommand\refname{List of References}

\bibliography{Literatur}
\bibliographystyle{unsrt}


\end{document}